\definecolor{webgreen}{rgb}{0,.5,0}
\definecolor{webbrown}{rgb}{.6,0,0}
\newcommand{\seqnum}[1]{\href{http://oeis.org/#1}{\underline{#1}}}
\theoremstyle{plain}
\newtheorem{theorem}{Theorem}
\newtheorem{lemma}[theorem]{Lemma}
\newtheorem{prop}[theorem]{Proposition}
\newtheorem{cor}[theorem]{Corollary}
\begin{document}

\title{Distributions of Statistics over Pattern-Avoiding Permutations}
\author{Michael Bukata \and Ryan Kulwicki \and Nicholas Lewandowski \and Lara Pudwell \and Jacob Roth \and Teresa Wheeland}

\date{Valparaiso University \\ \today}

\maketitle

\begin{abstract}
We consider the distribution of ascents, descents, peaks, valleys, double ascents, and double descents over permutations avoiding a set of patterns.  Many of these statistics have already been studied over sets of permutations avoiding a single pattern of length 3.  However, the distribution of peaks over 321-avoiding permutations is new and we relate it statistics on Dyck paths.  We also obtain new interpretations of a number of well-known combinatorial sequences by studying these statistics over permutations avoiding two patterns of length 3.
\end{abstract}

\section{Introduction}

Let $\mathcal{S}_n$ denote the set of permutations of $\{1,2,\dots, n\}$ and let $\mathrm{red}(w_1\cdots w_m)$ be the word obtained by replacing the $i$th smallest digit(s) of $w$ with $i$.  Given $\pi \in \mathcal{S}_n$ and $\rho \in \mathcal{S}_m$, we say that $\pi$ \emph{contains} $\rho$ as a pattern if there exist indices $1 \leq i_1 < i_2 < \cdots < i_m \leq n$ such that $\pi_{i_a} < \pi_{i_b}$ if and only if $\rho_a < \rho_b$; that is, $\mathrm{red}(\pi_{i_1}\cdots \pi_{i_m})=\rho$.  Otherwise $\pi$ \emph{avoids} $\rho$.  For example, the permutation $18274635 \in \mathcal{S}_8$ contains the pattern $\rho=4312$ using $i_1=2$, $i_2=4$, $i_3=5$, and $i_4=8$ since the entries of $\pi_{i_1}\pi_{i_2}\pi_{i_3}\pi_{i_4}=8745$ are in the same relative order as 4312; i.e., $\mathrm{red}(8745)=4312$.  Let $\mathcal{S}_n(\rho_1, \dots , \rho_p)$ be the set of permutations avoiding each of $\rho_1, \dots, \rho_p$; $\mathcal{S}_n(\rho_1, \dots , \rho_p)$ is called a \emph{pattern class} and the pattern(s) $\rho_1, \dots, \rho_p$ are called the \emph{basis} of the pattern class. Further, let $\mathrm{s}_n(\rho_1,\dots, \rho_p)=\left|\mathcal{S}_n(\rho_1, \dots, \rho_p)\right|$.   It is well-known that $\mathrm{s}_n(\rho)=\frac{\binom{2n}{n}}{n+1}$ (\seqnum{A000108}) when $\rho \in \mathcal{S}_3$, and there are a variety of techniques for determining $\mathrm{s}_n(\rho_1, \dots, \rho_p)$, depending on that patterns to be avoided.

Another well-known family of objects enumerated by the Catalan numbers (\seqnum{A000108}) is the set of \emph{Dyck paths} of semilength $n$. Here a Dyck path of semilength $n$ is a sequence of $n$ up-steps ($U=\langle 1,1\rangle$) and $n$ down-steps ($D=\langle 1,-1\rangle$) from $(0,0)$ to $(2n,0)$ that never falls below the $x$-axis.  We let $\mathcal{D}_n$ denoted the set of such paths.  Further, we let $\mathcal{I}_n$ be the set of indecomposable Dyck paths of semilength $n$, where a path is indecomposable if it only touches the $x$-axis at $(0,0)$ and at $(2n,0)$. Because both $\mathcal{S}_n(\rho)$ and $\mathcal{D}_n$ have the same enumeration when $\rho \in \mathcal{S}_3$, bijections with Dyck paths are a powerful tool to better understand the structure of these pattern classes.

Some common permutations and constructions require addition notation.  To this end, let $I_m = 1\cdots m$ be the increasing permutation of length $m$ and let $J_m = m(m-1)\cdots 1$ be the decreasing permutation of length $m$.  Further, given permutations $\alpha \in \mathcal{S}_a$ and $\beta \in \mathcal{S}_b$, let $\alpha \oplus \beta \in \mathcal{S}_{a+b}$ denote the direct sum of $\alpha$ and $\beta$ and let $\alpha \ominus \beta\in \mathcal{S}_{a+b}$ denote the skew-sum of $\alpha$ and $\beta$, defined as follows:
$$\alpha \oplus \beta = \begin{cases}
\alpha(i),& 1 \leq i \leq a;\\
a+\beta(i-a),& a+1 \leq i \leq a+b.
\end{cases}$$
$$\alpha \ominus \beta = \begin{cases}
\alpha(i)+b,& 1 \leq i \leq a;\\
\beta(i-a),& a+1 \leq i \leq a+b.
\end{cases}$$

Another thread of research is to consider the distribution of permutation statistics over $\mathcal{S}_n$.  Here, a \emph{permutation statistic} is a function $\mathrm{stat}: \mathcal{S}_n \to \mathbb{Z}^{+} \cup \{0\}$.  Some common statistics include ascents ($\mathrm{asc}$), descents ($\mathrm{des}$), double ascents ($\mathrm{dasc}$), double descents ($\mathrm{ddes}$), peaks ($\mathrm{pk}$), and valleys ($\mathrm{vl}$), which are defined as follows:  

$$\mathrm{asc}(\pi) = \left|\left\{i \middle| \pi_i< \pi_{i+1}\right\}\right|,$$

$$\mathrm{des}(\pi) = \left|\left\{i \middle| \pi_i> \pi_{i+1}\right\}\right|,$$

$$\mathrm{dasc}(\pi) = \left|\left\{i \middle| \pi_i< \pi_{i+1} \text{ and } \pi_{i+1}< \pi_{i+2}\right\}\right|,$$

$$\mathrm{ddes}(\pi) = \left|\left\{i \middle| \pi_i> \pi_{i+1} \text{ and } \pi_{i+1}> \pi_{i+2}\right\}\right|,$$

$$\mathrm{pk}(\pi) = \left|\left\{i \middle| \pi_i< \pi_{i+1} \text{ and } \pi_{i+1}> \pi_{i+2}\right\}\right|,$$

$$\mathrm{vl}(\pi) = \left|\left\{i \middle| \pi_i> \pi_{i+1} \text{ and } \pi_{i+1}< \pi_{i+2}\right\}\right|.$$
It is well-known that $\left|\left\{\pi \in \mathcal{S}_n \middle| \mathrm{asc}(\pi) = k\right\}\right|=\left|\left\{\pi \in \mathcal{S}_n \middle| \mathrm{des}(\pi) = k\right\}\right|$ is given by the Eulerian numbers (\seqnum{A008292}), while the distributions of $\mathrm{dasc}$, $\mathrm{ddes}$, $\mathrm{pk}$, and $\mathrm{vl}$, are newer to the literature or open.

Combining these two areas, we consider the distribution of permutation statistics over $\mathcal{S}_n(\rho_1, \dots, \rho_p)$.  Let 
$$\mathrm{a}_{n,k}^{\mathrm{stat}}(\rho_1, \dots, \rho_p) = \left|\left\{\pi \in \mathcal{S}_n(\rho_1, \dots, \rho_p) \middle| \mathrm{stat}(\pi)=k\right\}\right|.$$  Further, for $\pi \in \mathcal{S}_n$, let $\pi^r = \pi_n \cdots \pi_1$ and $\pi^c = (n+1-\pi_1)\cdots (n+1-\pi_n)$ denote the reverse and complement of $\pi$ respectively.  By symmetry, we observe the following:

\begin{align*}
\mathrm{a}_{n,k}^{\mathrm{asc}}(\rho_1, \dots, \rho_p)&=\mathrm{a}_{n,k}^{\mathrm{des}}(\rho_1^r, \dots, \rho_p^r)\\
&=\mathrm{a}_{n,k}^{\mathrm{des}}(\rho_1^c, \dots, \rho_p^c)\\
&=\mathrm{a}_{n,k}^{\mathrm{asc}}(\rho_1^{rc}, \dots, \rho_p^{rc}),
\end{align*}

\begin{align*}
\mathrm{a}_{n,k}^{\mathrm{dasc}}(\rho_1, \dots, \rho_p)&=\mathrm{a}_{n,k}^{\mathrm{ddes}}(\rho_1^r, \dots, \rho_p^r)\\
&=\mathrm{a}_{n,k}^{\mathrm{ddes}}(\rho_1^c, \dots, \rho_p^c)\\
&=\mathrm{a}_{n,k}^{\mathrm{dasc}}(\rho_1^{rc}, \dots, \rho_p^{rc}),
\end{align*}

\begin{align*}
\mathrm{a}_{n,k}^{\mathrm{pk}}(\rho_1, \dots, \rho_p)&=\mathrm{a}_{n,k}^{\mathrm{pk}}(\rho_1^r, \dots, \rho_p^r)\\
&=\mathrm{a}_{n,k}^{\mathrm{vl}}(\rho_1^c, \dots, \rho_p^c)\\
&=\mathrm{a}_{n,k}^{\mathrm{vl}}(\rho_1^{rc}, \dots, \rho_p^{rc}).
\end{align*}

In this paper, we consider $\mathrm{a}_{n,k}^{\mathrm{stat}}(\rho_1, \dots, \rho_p)$ where $p \in \left\{1, 2\right\}$ and where $\mathrm{stat} \in \left\{\mathrm{asc}, \mathrm{des}, \mathrm{dasc}, \mathrm{ddes}, \mathrm{pk}, \mathrm{vl}\right\}$.  A summary our results is given in Table \ref{T:allresults}.  In Section \ref{S:history}, we detail known results for $\mathrm{a}_{n,k}^{\mathrm{stat}}(\rho)$ where $\rho \in \mathcal{S}_3$.  While there are a number of previous results, $\mathrm{a}_{n,k}^{\mathrm{pk}}(321)$ is new, and we determine its distribution in Section \ref{S:peaks} via a bijection with Dyck paths.  In Section \ref{S:patternsets} we consider $\mathrm{a}_{n,k}^{\mathrm{stat}}(\rho_1, \rho_2)$ for $\rho_1, \rho_2 \in \mathcal{S}_3$; while these enumerations yield a number of well-known combinatorial sequences, the particular interpretations in terms of permutation statistics are new.

\begin{table}[hbt]
\begin{center}
\resizebox{\textwidth}{!}{
\begin{tabular}{|c|c|c|c|c|c|c|}
\hline
$B \backslash$ $\mathrm{st}$&$\mathrm{asc}$&$\mathrm{des}$&$\mathrm{pk}$&$\mathrm{vl}$&$\mathrm{dasc}$&$\mathrm{ddes}$\\
\hline
231&(known)&(known)&Thm. \ref{T:pk231}&Thms. \ref{T:biject312and321} and \ref{T:pk321}&(known)&(known)\\
\hline
321&(known)&(known)&Thm. \ref{T:pk321}&Thm. \ref{T:pk321}&(known)&(known)\\
\hline
213,312&Prop. \ref{P:213312asc}&Prop. \ref{P:213312asc}&Prop. \ref{P:213312dasc}&Prop. \ref{P:213312dasc}&Prop. \ref{P:213312pk}&Pr. \ref{P:213312vl}\\
\hline
132,213&Prop. \ref{P:132213asc}&Prop. \ref{P:132213asc}&Prop. \ref{P:132213dasc}&Prop. \ref{P:132213dasc}&Prop. \ref{pk132213}&Prop. \ref{pk132213}\\
\hline
213,231&Prop. \ref{P:132213asc}&Prop. \ref{P:132213asc}&Prop. \ref{P:132213dasc}&Prop. \ref{P:132213dasc}&Prop. \ref{pk132213}&Prop. \ref{pk132213}\\
\hline
123,132&Prop. \ref{asc123132}&Prop. \ref{des123132}&Prop. \ref{dasc123132}&Prop. \ref{P:123132ddes}&Prop. \ref{P:123132pk}&Prop. \ref{vl123132}\\
\hline
132,321&Prop. \ref{asc132321}&Prop. \ref{asc132321}&Prop. \ref{dasc132321}&Prop. \ref{ddes132321}&Prop. \ref{pk132321}&Prop. \ref{vl132321}\\
\hline
\end{tabular}}
\end{center}
\caption{Results for distribution of statistics over $\mathcal{S}_n(B)$}
\label{T:allresults}
\end{table}

\section{History}\label{S:history}

The study of permutation statistics has a rich history, with over 300 possible statistics listed in the database FindStat \cite{FindStat} as of this writing.  However, the distribution of statistics over pattern classes, rather than over all permutations, is newer.   Robertson, Saracino, and Zeilberger \cite{RSZ03} and Mansour and Robertson \cite{MR03} studied the distribution of fixed points over pattern classes whose basis is a subset of $\mathcal{S}_3$. Elizalde \cite{E04} gave an alternate approach to the distribution of fixed points using bijections with Dyck paths and also determined the distribution of excedances over the same pattern classes.

Dokos, Dwyer, Johnson, Sagan, and Selsor \cite{DDJSS12} defined two pattern sets $\{\rho_1, \dots, \rho_p\}$ and $\{\rho_1^\prime, \dots, \rho_p^\prime\}$ to be $\mathrm{st}$-Wilf equivalent if $\mathrm{a}_{n,k}^{\mathrm{st}}(\rho_1, \dots, \rho_p)=\mathrm{a}_{n,k}^{\mathrm{st}}(\rho_1^\prime, \dots, \rho_p^\prime)$ for all $n$ and $k$ and determined all $\mathrm{st}$-Wilf equivalences for subsets of $\mathcal{S}_3$ when $\mathrm{st}$ is the number of inversions or the major index.

Fixed points and excedances are statistics involving a single digit of $\pi$ at a time, while inversions and major index involve multiple digits.  The statistics we study in this paper may best be thought of as consecutive patterns in $\pi$.  In particular, $\mathrm{asc}(\pi)$ is the number of consecutive 12 patterns in $\pi$, $\mathrm{des}(\pi)$ is the number of consecutive 21 patterns in $\pi$, $\mathrm{dasc}(\pi)$ is the number of consecutive 123 patterns in $\pi$ and $\mathrm{ddes}(\pi)$ is the number of consecutive 321 patterns in $\pi$.  Meanwhile, $\mathrm{pk}(\pi)$ is the number of consecutive 132 patterns plus the number of consecutive 231 patterns in $\pi$ and $\mathrm{vl}(\pi)$ is the number of consecutive 213 patterns plus the number of consecutive 312 patterns in $\pi$.  In the following subsections, we review the history of results involving these statistics over specific pattern classes.

\subsection{Ascents and Descents}

Studying ascents and descents over $\mathcal{S}_n(\rho)$ where $\rho \in \mathcal{S}_3$ yields one of exactly two sequences: \seqnum{A001263} (the Narayana numbers) or \seqnum{A091156}.  

For $\rho \in \left\{132, 213, 231, 312\right\}$, $\mathrm{a}_{n,k}^{\mathrm{asc}}(\rho)= \mathrm{a}_{n,k}^{\mathrm{des}}(\rho) = \dfrac{\binom{n-1}{k}\binom{n}{k}}{k+1}$ (\seqnum{A001263}).  This enumeration follows by a bijection between permutations in $\mathcal{S}_n(231)$ with $k$ ascents with Dyck paths of semilength $n$ with $k$ DU factors, which are known to be enumerated by \seqnum{A001263}.  For more details, see Petersen \cite{P15}. 

On the other hand, for $\rho \in \left\{123, 321\right\}$ $\mathrm{a}_{n,k}^{\mathrm{asc}}(\rho)= \mathrm{a}_{n,k}^{\mathrm{des}}(\rho)$ is given by \seqnum{A091156}.  In 2010, Barnabei, Bonetti, and Silimbani \cite{BBS10} showed that $$G(q, z) = \sum_{n \geq 0} \sum_{k \geq 0} \mathrm{a}_{n,k}^{\mathrm{des}}(321) q^{k}z^n$$ satisfies 
$$z(1-z+qz)G^2 - G + 1 = 0.$$ 
Their work features a bijection between 321-avoiding permutations of length $n$ and Dyck paths of semilength $n$.  Tracking descents in the permutations corresponds to tracking both DU and DDD factors in the corresponding Dyck path.  In Section \ref{S:peaks}, we make use of the same bijection to study the distribution of peaks over 321-avoiding permutations.  As a corollary, we obtain a simpler way of tracking descents in permutations via the corresponding Dyck paths.

\subsection{Peaks, Valleys, and More}

Table \ref{T:onepatstats} shows the distributions of $\mathrm{pk}$, $\mathrm{vl}$, $\mathrm{dasc}$, and $\mathrm{ddes}$ over $\mathcal{S}_n(\rho)$ for $\rho \in \mathcal{S}_3$.  Notice that by reversal, understanding the distributions when $\rho \in \left\{231, 312, 321\right\}$ determines the distributions for the remaining patterns. The relationship between the first two rows of the table follows from the fact that $231^{rc}=312$. 

\begin{table}[hbt]
\begin{center}
\begin{tabular}{|c|c|c|c|c|}
\hline
$\rho \backslash$ $\mathrm{st}$&$\mathrm{pk}$&$\mathrm{vl}$&$\mathrm{dasc}$&$\mathrm{ddes}$\\
\hline
231& \seqnum{A091894}& \seqnum{A236406} & \seqnum{A092107} & \seqnum{A092107}\\
\hline
312& \seqnum{A236406}& \seqnum{A091894} & \seqnum{A092107} & \seqnum{A092107}\\
\hline
321& \seqnum{A236406} & \seqnum{A236406} & new & (none)\\
\hline
\end{tabular}
\end{center}
\caption{Distribution of statistics over $\mathcal{S}_n(\rho)$ for $\rho \in \mathcal{S}_3$}
\label{T:onepatstats}
\end{table}

In 2010, Barnabei, Bonetti, and Silimbani \cite{BBS10b} used bijections with Dyck paths to consider the joint distribution of various consecutive patterns with descents over $\mathcal{S}_n(312)$.  In another recent paper, Pan, Qiu, and Remmel \cite{PQR18} also investigated the distribution of consecutive patterns of length 3 over $\mathcal{S}_n(132)$ and $\mathcal{S}_n(123)$.   As seen above, their work directly addresses the distributions of $\mathrm{dasc}$ and $\mathrm{ddes}$.  However, $\mathrm{pk}$ and $\mathrm{vl}$ involve combining the distributions of two of their statistics at a time.  Since the results for $\mathrm{dasc}$ and $\mathrm{ddes}$ are already studied in \cite{BBS10b, PQR18}, we focus on $\mathrm{pk}$ and provide an alternate approach in Section \ref{S:peaks}.  Once we have determined $a_{n,k}^{\mathrm{pk}}(\rho)$ for $\rho \in \{231, 312, 321\}$, by symmetry, we have determined the distributions of $\mathrm{pk}$ and $\mathrm{vl}$ over all $\mathcal{S}_n(\rho)$ for $\rho \in \mathcal{S}_3$.  In Section \ref{S:patternsets} we extend this work to pattern classes that avoid two or more patterns.

\section{Peaks}\label{S:peaks}

We now wish to determine $a_{n,k}^{\mathrm{pk}}(\rho)$ for $\rho \in \{231, 312, 321\}$.

\begin{theorem}\label{T:pk231}
For $n \geq 1$, $k \geq 0$,
$a_{n,k}^{\mathrm{pk}}(231)=\dfrac{2^{n-2k-1}\binom{n-1}{2k}\binom{2k}{k}}{k+1}$.
\end{theorem}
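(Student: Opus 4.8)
The plan is to exploit the recursive block structure of $231$-avoiding permutations and then finish with a single generating-function computation via Lagrange inversion. First I would record the standard decomposition: if $\pi \in \mathcal{S}_n(231)$ has its largest entry $n$ in position $p$, then $n$ can only play the role of the ``$3$'' in a $231$ pattern, so avoidance forces every entry to the left of $n$ to be smaller than every entry to its right. Hence $\pi = \alpha\, n\, \beta$, where after reduction $\alpha \in \mathcal{S}_{p-1}(231)$ and $\beta \in \mathcal{S}_{n-p}(231)$ range independently over all $231$-avoiders (either possibly empty), and this is a bijection. This is exactly the decomposition underlying the Catalan count.

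Next I would track peaks through this decomposition. Because $n$ is the global maximum, the step into $n$ is always an ascent and the step out of $n$ is always a descent; consequently the position of $n$ is a peak precisely when both $\alpha$ and $\beta$ are nonempty, while the last entry of $\alpha$ and the first entry of $\beta$ can never be peaks of $\pi$. Every remaining peak of $\pi$ sits at an interior position of $\alpha$ or of $\beta$, and conversely every interior peak of $\alpha$ or $\beta$ survives, since its two neighbors are unchanged by the embedding. This gives
$$\mathrm{pk}(\pi) = \mathrm{pk}(\alpha) + \mathrm{pk}(\beta) + [\alpha \neq \emptyset]\,[\beta \neq \emptyset],$$
where the Iverson-bracket term is $1$ when both blocks are nonempty and $0$ otherwise. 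I expect the careful bookkeeping at these three boundary positions — together with the degenerate cases in which $\alpha$ or $\beta$ is empty or a singleton — to be the main obstacle, since an oversight there would silently corrupt the entire count.

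Having the additive peak rule, I would set $G(x,q) = \sum_{\pi \neq \emptyset} x^{|\pi|} q^{\mathrm{pk}(\pi)}$ and split the decomposition into four cases according to which of $\alpha,\beta$ are empty (both empty contributes $x$; exactly one empty contributes $2xG$; both nonempty contributes the extra peak and $xqG^2$). This yields the functional equation
$$G = x + 2xG + xqG^2, \qquad\text{equivalently}\qquad G = x\,\phi(G),\quad \phi(t)=1+2t+qt^2.$$

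Finally I would apply Lagrange inversion in the form $[x^n]G = \tfrac{1}{n}[t^{n-1}]\phi(t)^n$ and expand $(1+2t+qt^2)^n$ by the multinomial theorem. Demanding that the exponent of $t$ equal $n-1$ and the exponent of $q$ equal $k$ pins down a single multinomial term, with $k+1$ factors of $1$, $n-1-2k$ factors of $2t$, and $k$ factors of $qt^2$, so that
$$a_{n,k}^{\mathrm{pk}}(231) = \frac{1}{n}\cdot\frac{n!}{(k+1)!\,(n-1-2k)!\,k!}\,2^{\,n-1-2k}.$$
The only remaining work is routine factorial rearrangement to rewrite this as $\dfrac{2^{n-2k-1}\binom{n-1}{2k}\binom{2k}{k}}{k+1}$; all the genuine content lives in the decomposition and the peak rule above.
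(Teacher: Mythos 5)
Your proof is correct, and it takes a genuinely different route to the formula than the paper does, even though both arguments rest on the same structural fact. The paper uses the decomposition $\pi = \alpha\, n\, \beta$ only implicitly, as the recursive step in Petersen's bijection $\phi(\pi)=\phi(\alpha)U\phi(\beta)D$ with Dyck paths; it then observes that a peak of $\pi$ corresponds to a DUU factor of $\phi(\pi)$ (equivalently a DDU factor of the reversed path) and finishes by \emph{citing} the known enumeration of Dyck paths by DDU factors in OEIS A091894. You instead work with the decomposition directly on permutations, prove the peak-additivity rule
$\mathrm{pk}(\pi)=\mathrm{pk}(\alpha)+\mathrm{pk}(\beta)+[\alpha\neq\emptyset][\beta\neq\emptyset]$
(your boundary analysis at the three critical positions is exactly right: an apex at the last entry of $\alpha$ or the first entry of $\beta$ would have to exceed $n$ or be exceeded by $n$, respectively), derive the functional equation $G=x(1+2G+qG^2)$, and extract the coefficient by Lagrange inversion; the multinomial bookkeeping $a=k+1$, $b=n-1-2k$, $c=k$ and the factorial simplification both check out. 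What the paper's route buys is the connection between peaks of $231$-avoiders and a natural Dyck path statistic, which fits the Dyck-path machinery used throughout Section 3 (and reappears in the remark after Theorem \ref{T:pk321} relating $B(q,z)$ to this theorem); what your route buys is self-containment: you actually \emph{derive} the closed form, whereas the paper's proof delegates that final step to an OEIS entry. Your argument could stand alone as a complete replacement proof.
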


We prove Theorem \ref{T:pk231} via a bijection with Dyck paths. The bijection in this proof is given by Petersen \cite{P15} for the purpose of determining $\mathrm{a}_{n,k}^{\mathrm{des}}(\rho)$ and the eumeration given in Theorem \ref{T:pk231} is given by Petersen in \seqnum{A091894} of the On-line Encyclopedia of Integer Sequences.  We include the argument here for completeness.

\begin{proof}
Define a bijection $\phi: \mathcal{S}_n(231) \to \mathcal{D}_n$ recursively as follows.  The empty permutation maps to the empty path and $\phi(1)=UD$.  Now, for $\pi \in \mathcal{S}_n(231)$ where $n \geq 2$, suppose that $\pi_i=n$ and write $\pi=\pi_1\cdots \pi_{i-1} n \pi_{i+1} \cdots \pi_n$.  Let $\alpha=\mathrm{red}(\pi_1\cdots \pi_{i-1})$ and $\beta = \mathrm{red}(\pi_{i+1} \cdots \pi_n)$.  Notice that $\alpha$ or $\beta$ could be empty.  Now $\phi(\pi)=\phi(\alpha)U\phi(\beta)D$.

Notice that $\pi$ has a peak involving $n$ exactly when $\alpha$ and $\beta$ are both non-empty.  Since $\phi(\alpha)$ ends in a D and $\phi(\beta)$ begins in a U, by construction, there is a peak in $\pi$ involving $n$ exactly when the corresponding $U$ in $\phi(\pi)$ is part of a DUU factor.  Recursively, the number of peaks of $\pi$ corresponds to the number of DUU factors of $\phi(\pi)$, or, equivalently, to the number of DDU factors in the reversal of $\phi(\pi)$.  The number of paths in $\mathcal{D}_n$ with $k$ DDU factors  is given to be $\dfrac{2^{n-2k-1}\binom{n-1}{2k}\binom{2k}{k}}{k+1}$ in OEIS sequence \seqnum{A091894}.  
\end{proof}

The fact that $a_{n,k}^{\mathrm{pk}}(312)=a_{n,k}^{\mathrm{pk}}(321)$ requires another well-known bijection.

\begin{theorem}\label{T:biject312and321}
For all $n$ and $k$, $a_{n,k}^{\mathrm{pk}}(312)=a_{n,k}^{\mathrm{pk}}(321)$.
\end{theorem}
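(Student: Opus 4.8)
The goal is a bijection $\Phi:\mathcal{S}_n(312)\to\mathcal{S}_n(321)$ with $\mathrm{pk}(\Phi(\pi))=\mathrm{pk}(\pi)$. The first thing I would record is that $\mathrm{pk}$ depends only on the descent set $D(\pi)=\{i:\pi_i>\pi_{i+1}\}$: indeed $\mathrm{pk}(\pi)=|\{\,i\le n-2: i\notin D(\pi),\ i+1\in D(\pi)\,\}|$, the number of ascents immediately followed by a descent. Equivalently, writing $\pi=\alpha\,n\,\beta$ with $n$ in position $|\alpha|+1$, one gets the universal identity $\mathrm{pk}(\pi)=\mathrm{pk}(\alpha)+\mathrm{pk}(\beta)+[\alpha\neq\emptyset][\beta\neq\emptyset]$, since the maximum contributes a peak exactly when it has neighbors on both sides. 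I would flag the genuine subtlety at the outset: by the results quoted in Section \ref{S:history}, the descent statistic is \emph{not} equidistributed over the two classes ($\mathrm{des}$ gives the Narayana numbers \seqnum{A001263} over $\mathcal{S}_n(312)$ but \seqnum{A091156} over $\mathcal{S}_n(321)$), so no descent-preserving---in particular no ``obvious''---bijection can work. The map $\Phi$ must redistribute descents while conserving the finer peak count, and this is what makes the statement nontrivial.

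The approach I would take is to route through Dyck paths, reusing the machinery already in play. For $\mathcal{S}_n(321)$ I would use the bijection of Barnabei, Bonetti, and Silimbani to $\mathcal{D}_n$ from \cite{BBS10} (the same one invoked for Theorem \ref{T:pk321}); for $\mathcal{S}_n(312)$ I would build a companion recursive bijection to $\mathcal{D}_n$. A convenient starting point is that $\mathcal{S}_n(312)$ decomposes cleanly by the position of its \emph{minimum}: writing $\pi=\alpha\,1\,\beta$, avoidance of $312$ forces every entry of $\alpha$ to lie below every entry of $\beta$, so $\alpha$ and $\beta$ range independently over smaller $312$-avoiders, exactly mirroring a first-return decomposition of a Dyck path. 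Composing the $312$-to-path map with the inverse of the $321$-to-path map yields a bijection $\Phi=\phi_{321}^{-1}\circ\phi_{312}$, and the whole theorem reduces to a single claim: under each of the two bijections, $\mathrm{pk}$ is carried to one and the same statistic on $\mathcal{D}_n$ (necessarily the one enumerated by \seqnum{A236406}).

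The main obstacle is precisely this statistic-matching step. Under the two bijections a peak surfaces as different local path features---for the $\phi$-type recursion a peak registers as a $DUU$/$DDU$ factor, as in the proof of Theorem \ref{T:pk231}, whereas under the $321$-correspondence descents already split between $DU$ and $DDD$ features, so peaks correspond to a more delicate configuration. The real work is to show that these two path statistics are equidistributed on $\mathcal{D}_n$, which I would attempt either by an explicit path-level involution or by checking that both satisfy the same functional equation of the type available in the $231$ case. I expect this to be the crux, since the matching is exactly what must absorb the discrepancy between the descent distributions while leaving the peak distribution unchanged; once it is established, equidistribution of $\mathrm{pk}$ over $\mathcal{S}_n(312)$ and $\mathcal{S}_n(321)$ is immediate.
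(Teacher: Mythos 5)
Your proposal is a plan, not a proof: the entire content of the theorem is concentrated in the one step you explicitly leave undone. After composing $\Phi=\phi_{321}^{-1}\circ\phi_{312}$, the claim ``under each of the two bijections, $\mathrm{pk}$ is carried to one and the same statistic on $\mathcal{D}_n$'' is just a restatement of the theorem in Dyck-path language, and you offer no involution, no functional equation, and no verification of it --- only the expectation that one of these ``would'' work. Moreover, the local analysis you sketch for the $312$ side is shaky: your universal identity $\mathrm{pk}(\pi)=\mathrm{pk}(\alpha)+\mathrm{pk}(\beta)+[\alpha\neq\emptyset][\beta\neq\emptyset]$ is for the decomposition $\pi=\alpha\,n\,\beta$ around the \emph{maximum}, but the first-return decomposition you propose for $\mathcal{S}_n(312)$ is $\pi=\alpha\,1\,\beta$ around the \emph{minimum}, where the junction creates a valley, not a peak, and a new peak appears precisely when $\alpha$ ends with an ascent; so the peak statistic does not transport to a clean $DUU$/$DDU$ count the way it does in the proof of Theorem \ref{T:pk231}, and the ``delicate configuration'' on the $321$ side (which the paper only untangles later, via the two lemmas behind Theorem \ref{T:pk321}) is exactly the hard part you would still have to do.

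The paper's proof sidesteps all of this with one observation you are missing: in \emph{both} classes, a permutation is uniquely determined by the values and positions of its left-to-right maxima (in $\mathcal{S}_n(312)$ the non-maxima are forced greedily; in $\mathcal{S}_n(321)$ they must increase), and in both classes the middle entry of any peak must be a left-to-right maximum --- otherwise a larger earlier entry together with the peak pattern creates a forbidden $312$ or $321$. Hence the Simion--Schmidt-type map $\zeta$ \cite{SS85} that matches up permutations with the same left-to-right maxima in the same positions is a bijection, and since a left-to-right maximum at position $i$ is a peak exactly when position $i+1$ exists and is not a left-to-right-maximum position, $\zeta$ preserves $\mathrm{pk}$ outright. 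Your opening observation that $\mathrm{pk}$ is determined by the descent set, and that $\mathrm{des}$ itself is \emph{not} equidistributed (Narayana \seqnum{A001263} versus \seqnum{A091156}), is correct and worth stating --- it explains why the theorem is not formal --- but it pushes you toward descent-set machinery, which is the hard route; the left-to-right-maxima structure is what makes the statement essentially immediate.
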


The bijection below is a symmetry of a well-known bijection of Simion and Schmidt \cite{SS85} using left-to-right maxima.  They used this bijection to to show that $\mathrm{s}_n(312) = \mathrm{s}_n(321)$ for all $n$, while we use it to show the refinement that $a_{n,k}^{\mathrm{pk}}(312)=a_{n,k}^{\mathrm{pk}}(321)$.  We say that $\pi_i$ is a \emph{left-to-right maximum} of $\pi$ if $\pi_i>\pi_j$ for $j<i$.  For example the left-to-right maxima of $32658741$ are 3, 6, and 8.

\begin{proof}
We define a bijection $\zeta: \mathcal{S}_n(312) \to \mathcal{S}_n(321)$ that preserves left-to-right maxima.

Consider $\pi \in \mathcal{S}_n(312)$.  Suppose that the left-to-right maxima of $\pi$ are $\ell_1, \dots, \ell_k$ and that they are located in positions $j_1, \dots, j_k$.  We claim that $\pi$ is the unique 312-avoiding permutation with left-to-right maxima $\ell_1, \dots, \ell_k$ in positions $j_1, \dots, j_k$.  In particular, we determine the other entries of $\pi$ from left to right by placing in position $i$ the largest unused digit that is smaller than the rightmost left-to-right maxima before position $i$.

Similarly, there is a unique 321-avoiding permutation with left-to-right maxima $\ell_1, \dots, \ell_k$ in positions $j_1, \dots, j_k$.  In particular, the digits that are not left-to-right maxima must appear in increasing order.  Let $\zeta(\pi)$ be this permutation.

Notice that any peak of $\pi$ must involve a left-to-right maxima as its middle entry, and similarly any peak of $\zeta(\pi)$ must involve a left-to-right maxima as its middle entry.  Since $\zeta$ preserves left-to-right maxima both in value and in position, $\zeta$ preserves peaks.
\end{proof}

Finally, we determine $a_{n,k}^{\mathrm{pk}}(321)$, which is the central result of this paper.  Previously, Baxter \cite{B14} computed data about $a_{n,k}^{\mathrm{pk}}(321)$ using an enumeration scheme algorithm; however our generating function is new and our bijective argument ties together a number of previously-known generating function results.

\begin{theorem} \label{T:pk321}
$$\sum_{n \geq 0} \sum_{k \geq 0} a_{n,k}^{\mathrm{pk}}(321) q^kz^n=1+z\left(-\dfrac{-1+\sqrt{-4z^2q+4z^2-4z+1}}{2z(zq-z+1)}\right)^2.$$
\end{theorem}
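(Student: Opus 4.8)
The plan is to reduce the peak statistic to the descent statistic by exploiting a special feature of $321$-avoiding permutations, and then to invoke the Barnabei--Bonetti--Silimbani generating function $G(q,z)=\sum a_{n,k}^{\mathrm{des}}(321)q^kz^n$. The key observation is that a $321$-avoiding permutation can never have two consecutive descents: if $\pi_i>\pi_{i+1}>\pi_{i+2}$, then $\pi_i\pi_{i+1}\pi_{i+2}$ is an occurrence of $321$. Consequently every descent of $\pi\in\mathcal{S}_n(321)$ is isolated; a descent $\pi_j>\pi_{j+1}$ with $j\ge 2$ must then be preceded by an ascent $\pi_{j-1}<\pi_j$ (position $j-1$ cannot also be a descent), so $\pi_{j-1}<\pi_j>\pi_{j+1}$ forms a peak, and conversely every peak of a $321$-avoider arises this way. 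The only descent that is not part of a peak is one in the first position. I would record this as a lemma:
$$\mathrm{pk}(\pi)=\mathrm{des}(\pi)-[\pi_1>\pi_2]\qquad(\pi\in\mathcal{S}_n(321),\ n\ge 1),$$
with the bracket interpreted as $0$ when $n=1$.

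Next I would translate this identity into generating functions. Writing $P(q,z)=\sum a_{n,k}^{\mathrm{pk}}(321)q^kz^n$ and letting $B(q,z)$ be the descent generating function restricted to nonempty $321$-avoiders that begin with a descent, the lemma splits the sum over nonempty permutations into those with $\pi_1<\pi_2$ (or length $1$), where $\mathrm{pk}=\mathrm{des}$, and those with $\pi_1>\pi_2$, where $\mathrm{pk}=\mathrm{des}-1$. Since the two families together account for $G-1$, this gives
$$P=1+(G-1-B)+q^{-1}B=G-\tfrac{q-1}{q}\,B.$$
The crux is therefore to evaluate $B$, and I claim that $B=qz^2G^2$: a $321$-avoider beginning with a descent is built from its leading descent (two entries and one descent, contributing the factor $qz^2$) together with two freely chosen $321$-avoiding permutations whose descents are counted independently. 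I would prove this through the Barnabei--Bonetti--Silimbani bijection with Dyck paths, under which such permutations correspond exactly to paths whose first return to the $x$-axis encloses a nonempty subpath; a first-return decomposition $UP_1DP_2$ then produces the Catalan-type convolution $z^2G^2$, while the enclosing arch supplies the one extra descent. Verifying that the descent statistic distributes additively over this decomposition --- that the descents of $\pi$ are precisely the leading one together with those of the two subpaths --- is the main obstacle, and is where the explicit form of the bijection is needed.

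Finally I would combine the pieces. Substituting $B=qz^2G^2$ into the expression for $P$ gives $P=G-(q-1)z^2G^2$, and the Barnabei--Bonetti--Silimbani relation $z(1-z+qz)G^2-G+1=0$ rewrites $(q-1)z^2G^2$ as $G-1-zG^2$, so that
$$P=G-\bigl(G-1-zG^2\bigr)=1+zG^2.$$
It then remains to insert the closed form $G=\dfrac{1-\sqrt{1-4z+4z^2-4qz^2}}{2z(1-z+qz)}$ coming from the same quadratic; since the radicand $1-4z+4z^2-4qz^2$ and the denominator $2z(1-z+qz)$ are exactly those appearing in the statement, this produces the claimed expression $1+z\bigl(\tfrac{1-\sqrt{-4z^2q+4z^2-4z+1}}{2z(zq-z+1)}\bigr)^2$. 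The genuinely new work is the lemma and the evaluation of $B$; everything afterward is the recorded algebra that ties the answer to the known descent generating function.
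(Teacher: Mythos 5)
Your outer layers are sound: the lemma $\mathrm{pk}(\pi)=\mathrm{des}(\pi)-[\pi_1>\pi_2]$ for $\pi\in\mathcal{S}_n(321)$ is correct (no double descents, so every descent in position $j\geq 2$ is the right edge of a peak), the splitting $P=1+(G-1-B)+q^{-1}B$ is correct, and the closing algebra is right, since the Barnabei--Bonetti--Silimbani quadratic $G=1+z(1-z+qz)G^2$ turns $G-(q-1)z^2G^2$ into $1+zG^2$, which is the stated formula. The genuine gap is exactly where you flag it: the identity $B=qz^2G^2$ is never proved, and the sketch offered for it cannot be repaired as stated. First, the claimed characterization --- that $321$-avoiders beginning with a descent correspond to Dyck paths whose first return encloses a \emph{nonempty} subpath --- is false under \emph{any} bijection, because the counts disagree: a $321$-avoider with $\pi_1>\pi_2$ must have $\pi_2=1$ (otherwise $\pi_1,\pi_2$ and any later smaller value form a $321$), so deleting the $1$ gives a bijection with $\mathcal{S}_{n-1}(321)$ and there are $C_{n-1}$ such permutations; but the paths in question number $C_n-C_{n-1}$ ($3\neq 2$ already at $n=3$). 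Second, even granting the characterization, the first-return decomposition $UP_1DP_2$ with $P_1$ nonempty produces a convolution of the form $z(G-1)G$, not $z^2G^2$; these differ already at $q=1$. The identity $B=qz^2G^2$ does happen to be true, but proving it independently is essentially equivalent to the theorem itself, so as written all of the real content sits in the unproven step --- the same boundary discrepancies that force the paper to introduce its involution $\iota$ in Lemma \ref{L:A}.

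The encouraging news is that your skeleton can be completed with no path combinatorics at all, and the repair is simpler than the paper's route. Since $\pi_1>\pi_2$ forces $\pi_2=1$, every such $\pi$ arises uniquely from some $\rho\in\mathcal{S}_{n-1}(321)$ by incrementing all entries of $\rho$ and inserting the value $1$ in position $2$; position $2$ of $\pi$ is then an ascent, so $\mathrm{des}(\pi)=1+\#\{\text{descents of }\rho\text{ in positions }\geq 2\}=1+\mathrm{pk}(\rho)$ by your own lemma. Hence $B=qz\,(P-1)$, and your splitting becomes the \emph{linear} equation $P=G-(q-1)z(P-1)$, so that
\begin{equation*}
P=\frac{G+(q-1)z}{1+(q-1)z},
\end{equation*}
which the quadratic $G=1+z(1-z+qz)G^2$ converts into $P=1+zG^2$, completing the proof. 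That repaired argument is genuinely different from the paper's, which never relates peaks to descents directly on permutations but instead passes through Krattenthaler's bijection, the path statistics $\mathrm{st}$ and $\mathrm{st}^*$, and the involution of Lemma \ref{L:A}. As submitted, however, your proposal has a hole at its crux.
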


We first describe a bijection $\psi: \mathcal{S}_n(321) \to \mathcal{D}_n$ that is due to Krattenthaler \cite{K01}. Consider $\pi \in \mathcal{S}_n(321)$ and plot the points $(i,\pi_i)$ for $1 \leq i \leq n$.  Let $P=\left\{(p_1, \pi_{p_1}), \dots, (p_k, \pi_{p_k})\right\}$ be the set of points $(i, \pi_i)$ such that $\pi_i$ is not a left-to-right maxima of $\pi$ and $p_1<p_2< \cdots <p_k$.  Then define a path of $E=\langle 1,0\rangle$ steps and $N=\langle 0,1 \rangle$ steps from $(1,0)$ to $(n+1,n)$ in the following way: use $p_1-1$ $E$ steps followed by $\pi_{p_1}$ $N$ steps to get from $(1,0)$ to $(p_1,\pi_{p_1})$.  For $1 \leq i \leq k-1$, use $(p_{i+1}-p_i)$ $E$ steps followed by $(\pi_{p_{i+1}}-\pi_{p_i})$ $N$ steps to get from $(p_i,\pi_{p_i})$ to $(p_{i+1},\pi_{p_{i+1}})$.  Finally, take $((n+1)-p_k)$ $E$ steps followed by $(n-\pi_{p_k})$ $N$ steps to get from $(p_k,\pi_{p_k})$ to $(n+1,n)$.  Figure \ref{F:Krateg} shows this process for $\pi=617238459$.  By construction, this path stays below the line $y=x-1$, and we obtain the Dyck path $\psi(\pi)$ by replacing all $E$ steps with $U$ and all $N$ steps with $D$.  Therefore, $\psi(617238459)=UDUUDUDUUDUDUUDDDD$.

\begin{figure}
\begin{center}
\scalebox{0.5}{\begin{tikzpicture}
\draw[white,line width=8pt] (0,0)--(10,0)--(10,10)--(0,10)--(0,0);
\draw[step=1.0,black,thin,opacity=0.5] (0,0) grid (10,10);
\draw[thin,gray,dashed] (1,0)--(10,9);
\draw (0,0) -- (0,10)--(10,10)--(10,0)--(0,0);
\draw[line width=4pt] (1,0) -- (2,0)--(2,1)--(4,1)--(4,2)--(5,2)--(5,3)--(7,3)--(7,4)--(8,4)--(8,5)--(10,5)--(10,9);
\fill[black] (1,6) circle (0.3cm);
\fill[black] (2,1) circle (0.3cm);
\fill[black] (3,7) circle (0.3cm);
\fill[black] (4,2) circle (0.3cm);
\fill[black] (5,3) circle (0.3cm);
\fill[black] (6,8) circle (0.3cm);
\fill[black] (7,4) circle (0.3cm);
\fill[black] (8,5) circle (0.3cm);
\fill[black] (9,9) circle (0.3cm);
\end{tikzpicture}}
\end{center}

\caption{Bijection $\psi$ applied to $\pi=617238459$}
\label{F:Krateg}
\end{figure}
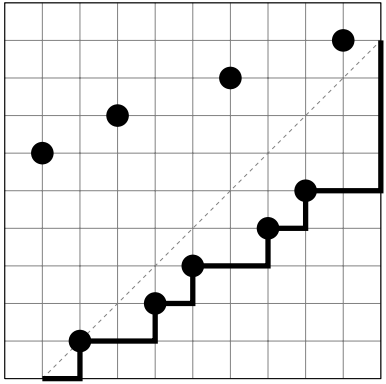

We know that a 321-avoiding permutation can be partitioned into two increasing subsequences: namely, the left-to-right maxima, and the remaining digits.  Necessarily, the middle digit of a peak in such a permutation must be a left-to-right maxima, and the final digit is not.  After $\pi_1$, whenever we have a left-to-right maxima in $\pi$, we have a UU factor in $\psi(\pi)$.  Whenever we have a non-left-to-right maxima in $\pi$, we have at least one D in $\psi(\pi)$. Therefore, a peak of $\pi \in \mathcal{S}_n(321)$ corresponds to a UUD factor in $\psi(\pi)$, with one exception.  A UUD factor that is followed only by Ds indicates that $\pi$ ended with a left-to-right maxima.  To this end, we introduce two statistics on Dyck paths.  Let $\mathrm{st}(d)$ be the number of UUD factors in Dyck path $d$, and let $\mathrm{st^*}(d)$ be the number of UUD factors in Dyck path $d$ that appear before the last $U$.  For example, $\mathrm{st}(UUUDDDUD)  = \mathrm{st^*}(UUUDDDUD) = 1$, while $\mathrm{st}(UUUDDDUUDD)=2$ and $\mathrm{st^*}(UUUDDDUUDD)=1$. We have just seen that
$$\sum_{n \geq 0} \sum_{k \geq 0} a_{n,k}^{\mathrm{pk}}(321) q^kz^n=\sum_{n \geq 0} \sum_{d \in \mathcal{D}_n} q^{\mathrm{st^*}(d)}z^{\left|d\right|}.$$
It remains to study the distribution of $\mathrm{st^*}$ on Dyck paths of semilength $n$.

We define the following four generating functions, which are weight-enumerators on Dyck paths.  Throughout, $\mathrm{st}(d)$ and $\mathrm{st^*}(d)$ are as defined above, $\mathcal{D}_n$ is the set of all Dyck paths of semilength $n$, and $\mathcal{I}_n$ is the set of indecomposable Dyck paths of semilength $n$.

\begin{center}
\begin{tabular}{cc}
$A:=\displaystyle{\sum_{n \geq 0} \sum_{d \in \mathcal{D}_n} q^{\mathrm{st}(d)}z^{n}},$&$B:=\displaystyle{\sum_{n \geq 0} \sum_{d \in \mathcal{I}_n} q^{\mathrm{st}(d)}z^{n}},$\\
$C:=\displaystyle{\sum_{n \geq 0} \sum_{d \in \mathcal{D}_n} q^{\mathrm{st^*}(d)}z^{n}},$&$D:=\displaystyle{\sum_{n \geq 0} \sum_{d \in \mathcal{I}_n} q^{\mathrm{st^*}(d)}z^{n}}.$
\end{tabular}
\end{center}

Notice that our goal is to find $C(q,z)$.  By construction we have $C = 1 + AD$ and $A = 1 + AB$.  We prove Theorem \ref{T:pk321} by first determining $A$ and $D$.

\begin{lemma} \label{L:D}
$$D(q,z)=\sum_{n \geq 0} \sum_{k \geq 0} a_{n,k}^{\mathrm{des}}(321) q^kz^{n+1}.$$
\end{lemma}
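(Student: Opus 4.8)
The plan is to prove the identity by exhibiting a weight-preserving bijection between indecomposable Dyck paths of semilength $n+1$, weighted by $\mathrm{st^*}$, and $321$-avoiding permutations of length $n$, weighted by $\mathrm{des}$. Since a path in $\mathcal{I}_{n+1}$ contributes $z^{n+1}$, summing such a bijection over all $n$ yields exactly the claimed equality of generating functions. The bijection I would use is $\Phi(\pi)=U\,\psi(\pi)\,D$, where $\psi$ is Krattenthaler's bijection described above: wrapping a path $e\in\mathcal{D}_n$ in one extra $U$ on the left and one extra $D$ on the right is the standard bijection $\mathcal{D}_n\to\mathcal{I}_{n+1}$, and composing with $\psi$ gives a bijection $\mathcal{S}_n(321)\to\mathcal{I}_{n+1}$ (with the empty permutation sent to $UD\in\mathcal{I}_1$). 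Everything then reduces to verifying $\mathrm{st^*}(U\psi(\pi)D)=\mathrm{des}(\pi)$.

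I would establish this through two facts. The first is a purely permutation-theoretic identity: for every $\pi\in\mathcal{S}_n(321)$,
$$\mathrm{des}(\pi)=\mathrm{pk}(\pi)+[\pi_1>\pi_2].$$
This holds because a $321$-avoiding permutation has no three consecutive decreasing entries (these would form a consecutive $321$), hence no double descent. Consequently every descent at a position $i\ge 2$ is immediately preceded by an ascent, so $\pi_{i-1}<\pi_i>\pi_{i+1}$ is the right half of a peak; conversely each peak produces exactly one such descent. The only descent not accounted for by a peak is one in the first position, which occurs precisely when $\pi_1>\pi_2$.

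The second fact records how $\mathrm{st^*}$ changes under the wrapping $e\mapsto UeD$. Here I can use the already-established equality $\mathrm{pk}(\pi)=\mathrm{st^*}(\psi(\pi))=\mathrm{st^*}(e)$. Wrapping inserts a single new $U$ at the far left, and the last $U$ of $UeD$ is just the last $U$ of $e$; analyzing the possible $UUD$ factors shows that the only genuinely new factor counted by $\mathrm{st^*}$ is the one at the left boundary, present exactly when $e$ begins with $UD$. I would then match this boundary condition to the permutation: by Krattenthaler's construction $e=\psi(\pi)$ begins with a run of $U$'s whose length is one less than the position of the first non-left-to-right-maximum of $\pi$, so $e$ begins with $UD$ (a single $U$) precisely when that first non-maximum sits in position $2$, i.e. precisely when $\pi_1>\pi_2$. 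Combining the two facts gives $\mathrm{st^*}(U\psi(\pi)D)=\mathrm{st^*}(e)+[\pi_1>\pi_2]=\mathrm{pk}(\pi)+[\pi_1>\pi_2]=\mathrm{des}(\pi)$, as required.

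The main obstacle is the second fact: the bookkeeping of $UUD$ factors under wrapping is delicate at the ends of the path. The newly created left-boundary factor must be compared against the position of the last $U$, and the degenerate case $e=UD$ (equivalently $n=1$, where the second $U$ of the new factor coincides with the last $U$ and is therefore \emph{not} counted by $\mathrm{st^*}$) behaves differently from the generic case; this case, along with the trivial cases $n=0,1$, must be checked directly to confirm it still agrees with $\mathrm{des}$. Once the boundary behavior is verified the bijection is weight-preserving and the lemma follows. As a byproduct this argument delivers the promised ``simpler way of tracking descents'': under $\psi$, the descents of a $321$-avoider are enumerated by the $\mathrm{st^*}$-statistic of the indecomposable path $U\psi(\pi)D$, in place of the $DU$-plus-$DDD$ count of Barnabei, Bonetti, and Silimbani.
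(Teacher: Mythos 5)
Your proof is correct, and it rests on the same construction as the paper's own proof: wrap Krattenthaler's path as $U\psi(\pi)D$ to obtain an indecomposable path of semilength $n+1$, then show this map carries $\mathrm{des}(\pi)$ to $\mathrm{st^*}$. Where you genuinely differ is in how weight preservation is verified. The paper re-traces descents through $\psi$ directly: a descent is a left-to-right maximum followed by a non-maximum, left-to-right maxima after the first give $UU$ factors and non-maxima give the following $D$ steps, so $\mathrm{des}(\pi)$ equals the number of $UUD$ factors of $\psi(\pi)$ before the last $U$ plus an indicator that $\psi(\pi)$ begins with $UD$, and the wrapping converts that initial $UD$ into an honest $UUD$ factor. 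You instead factor the argument through the peak statistic, using three ingredients: the permutation-level identity $\mathrm{des}(\pi)=\mathrm{pk}(\pi)+[\pi_1>\pi_2]$ (valid because a $321$-avoider has no double descents, so every descent not in the first position is the right half of a peak), the pointwise equality $\mathrm{pk}(\pi)=\mathrm{st^*}(\psi(\pi))$ already established before the lemma, and the path-level fact $\mathrm{st^*}(UeD)=\mathrm{st^*}(e)+[e \text{ begins with } UD]$. Your version is more modular, since it reuses the peak correspondence rather than re-deriving a descent correspondence from the geometry of $\psi$, and it makes explicit a clean identity ($\mathrm{des}=\mathrm{pk}+[\pi_1>\pi_2]$ on $\mathcal{S}_n(321)$) that the paper leaves implicit; the price is exactly the boundary analysis you flag: the displayed path identity fails at $e=UD$, where the new factor's second $U$ is the last $U$ and hence uncounted, and the translation ``$e$ begins with $UD$ iff $\pi_1>\pi_2$'' requires $n\geq 2$, so $n=0,1$ must be checked by hand (both sides give $0$, so they do agree). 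The paper's direct tracing absorbs these degenerate cases into its case analysis and never needs the intermediate identity.
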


\begin{proof}
Suppose $\pi \in \mathcal{S}_n(321)$.  Any descent in $\pi$ consists of a left-to-right maxima followed by a non-left-to-right-maxima.  Using bijection $\psi$, defined above, $\pi$ has a left-to-right maxima at the beginning of $\pi$ and also whenever $\psi(\pi)$ has a UU factor.  Similarly, $\pi$ has a non-left-to-right maxima whenever it has a UD factor, unless the D is at the end of $\psi(\pi)$.  Together, we detect a descent in $\pi$ when $\psi(\pi)$ begins with a UD factor and whenever $\psi(\pi)$ has a UUD factor before the last $U$.  To convert the first case into a UUD factor, let $\widehat{\psi}(\pi)$ be the Dyck path obtained by adding a U to the beginning and a D to the end of $\psi(\pi)$.  By construction, $\widehat{\psi}(\pi)$ is an indecomposable Dyck path of semilength $n+1$.  Now, each descent in $\pi$ corresponds to a UUD factor in $\widehat{\psi}(\pi)$ that appears before the final U, which proves the lemma.
\end{proof}

Next, we consider $A(q,z)$.

\begin{lemma} \label{L:A}
$$A(q,z)=\sum_{n \geq 0} \sum_{k \geq 0} a_{n,k}^{\mathrm{des}}(321) q^kz^{n}.$$
\end{lemma}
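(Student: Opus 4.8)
The plan is to show that $A$ satisfies the same quadratic functional equation that Barnabei, Bonetti, and Silimbani \cite{BBS10} established for the descent generating function $G(q,z)=\sum_{n,k} a_{n,k}^{\mathrm{des}}(321)q^kz^n$, namely $z(1-z+qz)G^2-G+1=0$. Because this quadratic has a unique formal power series solution with constant term $1$, and both $A$ and $G$ have constant term $1$ (the empty path and the empty permutation), matching the equation forces $A=G$, which is exactly the claim. So the whole task reduces to deriving a functional equation for $A$ from the combinatorics of the statistic $\mathrm{st}$ on Dyck paths, and this can proceed independently of Lemma \ref{L:D}.

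First I would use the first-return decomposition: every nonempty Dyck path factors uniquely as $d=c\,d'$, where $c$ is its first indecomposable component and $d'$ is a (possibly empty) Dyck path. The crucial point is that $\mathrm{st}$ is additive across this concatenation: since $c$ returns to the $x$-axis, the two steps at the junction read $DU$, and no $UUD$ factor can straddle a $DU$ (the middle or first step would be a $D$), so $\mathrm{st}(c\,d')=\mathrm{st}(c)+\mathrm{st}(d')$. This recovers the stated relation $A=1+AB$. Next I would express $B$ through $A$ using the structure of an indecomposable path $c=UpD$ with $p$ a Dyck path. A step-by-step count of where $UUD$ factors can occur — inside $p$, at the appended leading $U$, or at the appended trailing $D$ — yields the key boundary formula $\mathrm{st}(UpD)=\mathrm{st}(p)+[\,p \text{ begins with } UD\,]$: every $UUD$ of $p$ survives, the leading $U$ creates one new $UUD$ exactly when $p$ starts with $UD$, and the trailing $D$ never creates a new factor because $p$ already ends in $D$.

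With this in hand I would decompose the sum defining $B$ according to the first component of $p$ (empty, equal to $UD$, or of semilength at least $2$); the three cases contribute $1$, $qzA$, and $(B-z)A$ respectively (the $-z$ removing the single semilength-one component $UD$), giving $B=z\bigl(1+qzA+(B-z)A\bigr)$. Substituting $AB=A-1$ from $A=1+AB$ to eliminate $B$ collapses this to $z(1-z+qz)A^2-A+1=0$, precisely the Barnabei--Bonetti--Silimbani equation, which completes the argument. I expect the main obstacle to be the boundary bookkeeping behind $\mathrm{st}(UpD)$ and the additivity claim: one must pin down each edge case (empty $p$, $p$ beginning $UU$ versus $UD$, the trailing $D$, and the $UD$ first component), since miscounting even a single boundary $UUD$ factor would corrupt the quadratic. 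This delicacy is underscored by the fact that the naive map $p\mapsto UpD$ is \emph{not} statistic-preserving for $\mathrm{st^*}$ (for instance $\mathrm{st}(UUDD)=1$ but $\mathrm{st^*}(UUUDDD)=0$), so the functional-equation route, rather than a direct bijection, is the safer path.
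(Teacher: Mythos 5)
Your proof is correct, and it takes a genuinely different route from the paper's. The paper proves Lemma \ref{L:A} combinatorially: using the bijection $\psi$, it notes that $\mathrm{st}(d)$ and $\mathrm{des}(\psi^{-1}(d))$ differ by at most one, and then constructs an explicit involution $\iota$ on $\mathcal{D}_n$ that fixes the paths where the two statistics agree and swaps the two off-by-one classes, so that UUD factors on Dyck paths are equidistributed with descents on $\mathcal{S}_n(321)$. Your argument never leaves the Dyck-path side: the first-return decomposition (with the correct observation that no UUD factor can straddle a DU junction), together with the boundary identity $\mathrm{st}(UpD)=\mathrm{st}(p)+[\,p\text{ begins with }UD\,]$, gives $B=z\bigl(1+qzA+(B-z)A\bigr)$, and eliminating $B$ via $AB=A-1$ does collapse this to $z(1-z+qz)A^2-A+1=0$; since that equation has a unique formal power series solution and the descent generating function satisfies it by \cite{BBS10}, the lemma follows. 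I checked the algebra and your edge cases (empty $p$, $p$ beginning $UU$ versus $UD$, the trailing $D$, the $UD$ first component), and they are all handled correctly. The trade-off is this: your route is shorter and less delicate than the involution's case analysis, it actually justifies the relation $A=1+AB$ that the paper asserts only ``by construction,'' and it produces $B(q,z)$ explicitly, so the closing observations of Section \ref{S:peaks} come for free. On the other hand, it is not self-contained --- the entire permutation side is outsourced to the functional equation of \cite{BBS10} --- and it gives up the bijective content of the paper's proof, which is exactly what lets the paper remark that UUD factors alone (rather than BBS's DU plus DDD factors) suffice to track descents. Since Theorem \ref{T:pk321} quotes the closed form from \cite{BBS10} anyway, that dependence costs nothing toward the paper's main result.
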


\begin{proof}
By definition, $A(q,z)$ tracks all UUD factors across Dyck paths of semilength $n$.  

We have seen in the proof of Lemma \ref{L:D} that there is at most one UUD factor in $\psi(\pi)$ that does not correspond to a descent of $\pi$, namely a UUD factor that is followed only by Ds.  Similarly, there is at most one descent in $\pi$ that does not correspond to a UUD factor in $\psi(\pi)$, namely a descent at the beginning of $\pi$ corresponds to $\psi(\pi)$ beginning with a UD factor.  In other words, given $\pi \in \mathcal{S}_n(321)$ with $d=\psi(\pi)$, either $\mathrm{st}(d) = \mathrm{des}(\pi)$, $\mathrm{st}(d)+1 = \mathrm{des}(\pi)$, or $\mathrm{st}(d) = \mathrm{des}(\pi)+1$,

We prove the lemma by giving an involution $\iota$ on $\mathcal{D}_n$.  

If $d=\psi(\pi)$ with $\mathrm{st}(d) = \mathrm{des}(\pi)$, then $\iota(d)=d$.  

Now, consider $d=\psi(\pi)$ with $\mathrm{st}(d)=k$ and $\mathrm{des}(\pi)=k+1$. Since $\pi$ has one more descent than $\mathrm{st}(d)$, we know that $\psi(\pi)$ begins with UD and $d$ does not have a UUD factor at the end.  In other words, $d=(UD)^id^{\prime}DUD^j$ for some positive $i$ and $j$ where $d^{\prime}$ is a sequence of $n-i-1$ Us and $n-i-j-1$ Ds that does not begin in $UD$.  Let $\iota(d)=d^{\prime}DU(U^iD^i)D^j$.  Now, by construction, $\iota(d)$ has $k+1$ UUD factors, since a new UUD factor was introduced at the end, but $\mathrm{des}(\phi^{-1}(\iota(d)))=k$ since there is no longer an initial UD in $\iota(d)$.

Finally, consider $d=\psi(\pi)$ with $\mathrm{st}(d)=k+1$ and $\mathrm{des}(\pi)=k$. Since $d$ has one more UUD factor than $\mathrm{des}(\pi)$, we know that $d$ ends with $DU^iD^j$ for some $j \geq i \geq 2$ and $d$ does not begin with UD.  In other words, $d=d^{\prime}DU^iD^j$ where $d^{\prime}$ is a sequence of $n-i$ Us and $n-j-1$ Ds that does not begin in $UD$.  Let $\iota(d)=(UD)^{i-1}d^{\prime}DUD^{j-i}$.  Now, by construction, $\iota(d)$ has $k$ UUD factors, since a UUD factor was removed at the end, but $\mathrm{des}(\phi^{-1}(\iota(d)))=k+1$ since there is a new initial UD in $\iota(d)$.

By involution $\iota$, we see that UUD factors on Dyck paths are equidistributed with descents in 321-avoiding permutations.
\end{proof}

An example of $\iota$ in action is shown in Figure \ref{F:involution}.

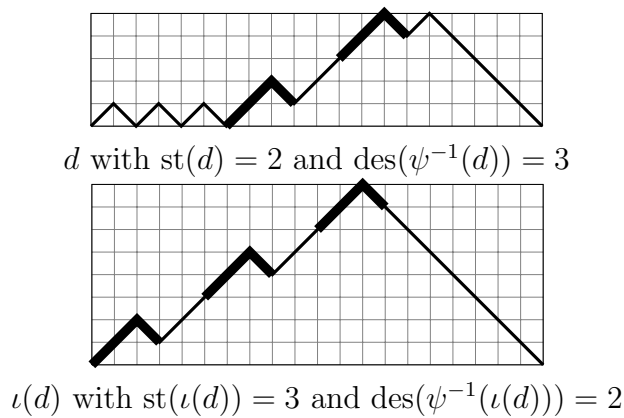
\begin{figure}
\begin{center}
\scalebox{0.3}{\begin{tikzpicture}
\draw[white,line width=8pt] (0,0)--(20,0)--(20,5)--(0,5)--(0,0);
\draw[step=1.0,black,thin,opacity=0.5] (0,0) grid (20,5);
\draw (0,0) -- (20,0)--(20,5)--(0,5)--(0,0);
\draw[line width=4pt] (0,0) -- (1,1)--(2,0)--(3,1)--(4,0)--(5,1)--(6,0)--(8,2)--(9,1)--(13,5)--(14,4)--(15,5)--(20,0);
\draw[line width=12pt] (6,0)--(8,2)--(9,1);
\draw[line width=12pt] (11,3)--(13,5)--(14,4);
\end{tikzpicture}}

$d$ with $\mathrm{st}(d)=2$ and $\mathrm{des}(\psi^{-1}(d)) = 3$

\scalebox{0.3}{\begin{tikzpicture}
\draw[white,line width=8pt] (0,0)--(20,0)--(20,8)--(0,8)--(0,0);
\draw[step=1.0,black,thin,opacity=0.5] (0,0) grid (20,8);
\draw (0,0) -- (20,0)--(20,8)--(0,8)--(0,0);
\draw[line width=4pt] (0,0)--(2,2)--(3,1)--(7,5)--(8,4)--(9,5)--(12,8)--(20,0);
\draw[line width=12pt] (0,0)--(2,2)--(3,1);
\draw[line width=12pt] (5,3)--(7,5)--(8,4);
\draw[line width=12pt] (10,6) -- (12,8)--(13,7);
\end{tikzpicture}}

$\iota(d)$ with $\mathrm{st}(\iota(d))=3$ and $\mathrm{des}(\psi^{-1}(\iota(d))) = 2$

\end{center}

\caption{An example of $\iota(d)$}
\label{F:involution}
\end{figure}

As a consequence of Lemmas \ref{L:D} and \ref{L:A}, we see that $D=zA$.  Therefore, $C(q,z)=1+AD = 1+zA^2$.  Using Barnabei, Bonetti, and Silimbani's result for $A(q,z)$ in \cite{BBS10} yields Theorem \ref{T:pk321}.

Two nice observations follow from this proof.  First, Barnabei, Bonetti, and Silimbani determined $A(q,z)$ by counting the number of DU factors plus the number of DDU factors in $\phi(\pi)$.  We have shown in Lemma \ref{L:A} that $A(q,z)$ can be determined by counting only the number of UUD factors in $\phi(\pi)$.  Second, by Lemma \ref{L:A} and the fact that $A=1+AB$, we can determine $B(q,z)$.  It turns out

$$B(q,z)=z(1-q)+\sum_{n \geq 0} \sum_{k \geq 0} a_{n,k}^{\mathrm{pk}}(231) q^{k+1}z^{n+1},$$
matching the enumeration in Theorem \ref{T:pk231}.

Thus, the distributions of both $\mathrm{st}$ and $\mathrm{st^*}$ are in bijection with distributions of statistics over pattern-avoiding permutations whether we consider them over all Dyck paths or only over indecomposable Dyck paths.

\section{Avoiding Two Patterns}\label{S:patternsets}

We now consider $\mathrm{a}_{n,k}^{\mathrm{stat}}(\rho_1, \rho_2)$ where $\mathrm{stat} \in \left\{\mathrm{asc}, \mathrm{des}, \mathrm{dasc}, \mathrm{ddes}, \mathrm{pk}, \mathrm{vl}\right\}$ and $\rho_1,\rho_2 \in \mathcal{S}_3$.  Using the symmetries of reverse and complement, there are 6 pairs of patterns to consider: $\{123,321\}$, $\{213, 312\}$, $\{132, 213\}$, $\{213, 231\}$, $\{123, 132\}$, and $\{132, 321\}$.  Simion and Schmidt \cite{SS85} determined $\left|\mathcal{S}_n(\rho_1,\rho_2)\right|$ for each of these classes.  We now use the permutation structures they determined to find $\mathrm{a}_{n,k}^{\mathrm{stat}}(\rho_1, \rho_2)$ for our desired statistics.  We already know that $\left|\mathcal{S}_n(123,321)\right|=0$ for $n \geq 5$, so there are 5 non-trivial pairs of permutation patterns to consider.  A summary of the results of this section is given in Tables \ref{T:pairs1}, \ref{T:pairs2}, and \ref{T:pairs3}.  Just as many results for $\mathrm{a}_{n,k}^{\mathrm{stat}}(\rho)$ with $\rho \in \mathcal{S}_3$ follow from bijections with Dyck paths, many results in this section follow from bijections with binary sequences.

\begin{table}
\begin{center}

\begin{tabular}{| c | c | c |}
		\hline
		Patterns \textbackslash Statistic & $\mathrm{asc}$ & $\mathrm{des}$\\ \hline
	
			213,312 &
		$\binom{n-1}{k}$ &
		$\binom{n-1}{k}$\\
		\hline
	
		132,213 &
		$\binom{n-1}{k}$ &
		$ \binom{n-1}{k}$ \\ \hline

		213,231 &
		$\binom{n-1}{k}$ &
		$\binom{n-1}{k}$  \\ \hline

		123,132 &
		$\binom{n}{2k}$ &	
		$\binom{n}{2(n-k-1)}$  \\ \hline
		
				132,321 & 
		$\begin{aligned} &1, && k=n-1; \\ &\binom{n}{2}, && k=n-2.  \end{aligned}$ &
		$\begin{aligned} &1, && k=0; \\ &\binom{n}{2}, && k=1.  \end{aligned}$  \\ \hline
	\end{tabular}
\end{center}
\caption{Distribution of $\mathrm{asc}$ and $\mathrm{des}$ over pattern classes of the form $\mathcal{S}_n(\rho_1, \rho_2)$ with $\rho_1, \rho_2 \in \mathcal{S}_3$}
\label{T:pairs1}
\end{table}
	
	\begin{table}
	\begin{center}
	\begin{tabular}{| c | c | c |}
		\hline
		Patterns \textbackslash Statistic & $\mathrm{dasc}$ & $\mathrm{ddes}$ \\ \hline
	
			213,312 &
		$\begin{aligned} & n, && k=0; \\ &\binom{n-1}{k+1}, && k \geq 1. \end{aligned}$ &
		$\begin{aligned} &n, && k=0; \\ &\binom{n-1}{k+1}, && k \geq 1. \end{aligned}$ \\
		\hline
	
		132,213 &
		\seqnum{A076791} &
		\seqnum{A076791}   \\ \hline

		213,231 &
		\seqnum{A076791}  &
		\seqnum{A076791} \\ \hline

		123,132 &
		trivial &
		$\binom{n-2}{k}+2\binom{n-3}{k}$\\ \hline
		
				132,321 & 
		$\begin{aligned} & 1, && k = n-2; \\ & n, && k = n -3; \\ &\binom{n}{2}-n, && k = n -4. \end{aligned}$ &
		trivial \\ \hline
	\end{tabular}
	\end{center}
	
	\caption{Distribution of $\mathrm{dasc}$ and $\mathrm{ddes}$ over pattern classes of the form $\mathcal{S}_n(\rho_1, \rho_2)$ with $\rho_1, \rho_2 \in \mathcal{S}_3$}
\label{T:pairs2}
\end{table}

\begin{table}
\begin{center}
	\begin{tabular}{| c | c | c  |}
		\hline
		Patterns \textbackslash Statistic & $\mathrm{pk}$ & $\mathrm{vl}$ \\ \hline
	
			213,312 &
		$\begin{aligned} & 2, && k=0; \\ & 2^{n-1}-2, && k=1. \end{aligned}$ & trivial \\
		\hline
	
		132,213 &
		$\binom{n}{2k+1}$ &
		$\binom{n}{2k+1}$ \\ \hline

		213,231 &
		$\binom{n}{2k+1}$ &
		$\binom{n}{2k+1}$ \\ \hline

		123,132 &
		$\binom{n}{2k+1}$ &
		$2\cdot\binom{n-1}{2k}$ \\ \hline
		
				132,321 & 
		$\begin{aligned} & n, && k=0; \\ &\binom{n-1}{2}, && k=1. \end{aligned}$ &
		$\begin{aligned}& 2, && k=0; \\ &\binom{n}{2}-1, && k=1. \end{aligned}$ \\ \hline
	\end{tabular}
	\end{center}
\caption{Distribution of $\mathrm{pk}$ and $\mathrm{vl}$ over pattern classes of the form $\mathcal{S}_n(\rho_1, \rho_2)$ with $\rho_1, \rho_2 \in \mathcal{S}_3$}
\label{T:pairs3}
\end{table}

We consider each pattern pair in turn.

\subsection{Statistics on \texorpdfstring{$\mathcal{S}_n(213,312)$}{Sn(213,312)}}

We first describe the structure of a $\{213, 312\}$-avoiding permutation.  Let $\pi \in \mathcal{S}_n(213,312)$.  Suppose that $\pi_i=n$.  Then $\pi_1 \cdots \pi_{i-1}$ must form an increasing subpermutation (otherwise $\pi$ has a 213 pattern), and $\pi_{i+1}\cdots \pi_n$ must form a decreasing subpermutation (otherwise $\pi$ has a 312 pattern).   There are $\binom{n-1}{i-1}$ ways to choose the digits before $\pi_i=n$, so summing over all possible values for $i$, we have that $\left|\mathcal{S}_n(213,312)\right|=\sum_{i=1}^{n} \binom{n-1}{i-1}=2^{n-1}$. This structure helps prove the following propositions.

\begin{prop}\label{P:213312asc}
$$\mathrm{a}_{n,k}^{\mathrm{asc}}(213,312)=\mathrm{a}_{n,k}^{\mathrm{des}}(213,312)=\binom{n-1}{k}.$$
\end{prop}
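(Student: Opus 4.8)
The plan is to exploit the explicit structure of $\{213,312\}$-avoiding permutations described immediately before the proposition: each such $\pi$ is determined by the position $i$ of its maximum entry $n$, with $\pi_1\cdots\pi_{i-1}$ increasing and $\pi_{i+1}\cdots\pi_n$ decreasing. The key observation is that once the positions occupied before and after $n$ are chosen (equivalently, once we choose which $i-1$ of the remaining $n-1$ values sit to the left of $n$), the entire permutation is forced, since the left block must be increasing and the right block decreasing. So $\mathcal{S}_n(213,312)$ is in bijection with subsets of $\{1,\dots,n-1\}$, recovering the count $2^{n-1}$, and I would track ascents through this bijection.

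First I would count ascents directly. In such a $\pi$, the ascents are exactly the positions $j$ with $\pi_j<\pi_{j+1}$. Within the increasing left block $\pi_1\cdots\pi_{i-1}n$ every consecutive pair is an ascent (that is $i-1$ ascents, including the step up into $n$), and within the decreasing right block there are none, and the step from $n$ down to $\pi_{i+1}$ is a descent. Hence $\mathrm{asc}(\pi)=i-1$ whenever $i<n$, where $i$ is the position of $n$; and when $i=n$ (so $n$ is last) the whole permutation is increasing, giving $\mathrm{asc}=n-1$. I would verify this accounts for everything cleanly: writing $m=i-1$ for the size of the left block, the number of ascents equals $m$, and the number of permutations with left block of size $m$ is $\binom{n-1}{m}$, the number of ways to choose the $m$ values placed to the left of $n$. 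This yields $\mathrm{a}_{n,k}^{\mathrm{asc}}(213,312)=\binom{n-1}{k}$ immediately.

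For the descent count I would not redo the argument but instead invoke the symmetry relation stated in the introduction, namely $\mathrm{a}_{n,k}^{\mathrm{asc}}(\rho_1,\rho_2)=\mathrm{a}_{n,k}^{\mathrm{des}}(\rho_1^{rc},\rho_2^{rc})$, together with the fact that the basis $\{213,312\}$ is invariant under reverse-complement (one checks $213^{rc}=213$ and $312^{rc}=312$, so the set is fixed). This forces $\mathrm{a}_{n,k}^{\mathrm{des}}(213,312)=\mathrm{a}_{n,k}^{\mathrm{asc}}(213,312)=\binom{n-1}{k}$ with no further computation. Alternatively one could observe that the number of descents of $\pi$ equals $(n-1)-\mathrm{asc}(\pi)$ only when $\pi$ has no double-counted positions, which is subtler, so the symmetry route is cleaner.

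I expect no serious obstacle here; the proposition is essentially a direct reading of the structure theorem. The one point requiring care is the boundary case $i=n$, where the descent into the right block is absent, so that I must confirm the ascent count is $n-1=m$ consistently rather than accidentally off by one; handling the case $i<n$ versus $i=n$ uniformly via the left-block size $m$ is the way to avoid this. The combinatorial identity $\sum_m\binom{n-1}{m}=2^{n-1}$ serves as a consistency check that the distribution sums to the correct total.
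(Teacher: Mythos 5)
Your ascent count is correct and is essentially the paper's own argument: the position of $n$ determines the number of ascents, and choosing which $k$ values sit to its left gives $\binom{n-1}{k}$. The problem is your descent count. The symmetry you invoke is misquoted, and the invariance claim you base it on is false. The paper's introduction states $\mathrm{a}_{n,k}^{\mathrm{asc}}(\rho_1,\rho_2)=\mathrm{a}_{n,k}^{\mathrm{des}}(\rho_1^{r},\rho_2^{r})=\mathrm{a}_{n,k}^{\mathrm{des}}(\rho_1^{c},\rho_2^{c})=\mathrm{a}_{n,k}^{\mathrm{asc}}(\rho_1^{rc},\rho_2^{rc})$: reverse-complement \emph{preserves} ascents, it does not convert them to descents, so the relation ``$\mathrm{a}_{n,k}^{\mathrm{asc}}(\rho_1,\rho_2)=\mathrm{a}_{n,k}^{\mathrm{des}}(\rho_1^{rc},\rho_2^{rc})$'' is not the one in the paper and is not true in general. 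Moreover, $213^{rc}=132$ and $312^{rc}=231$ (rotate the permutation matrix by $180^\circ$), so $\{213,312\}$ is \emph{not} fixed by reverse-complement; its image is $\{132,231\}$. Your two errors happen to cancel and land on a true conclusion, but neither step is valid as written.

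The repair is easy and worth stating precisely: the set $\{213,312\}$ \emph{is} closed under reverse, since $213^{r}=312$ and $312^{r}=213$, and reverse exchanges ascents with descents, so $\mathrm{a}_{n,k}^{\mathrm{asc}}(213,312)=\mathrm{a}_{n,k}^{\mathrm{des}}(213^{r},312^{r})=\mathrm{a}_{n,k}^{\mathrm{des}}(213,312)$. Alternatively, the paper's own route is even shorter, and your stated reservation about it is unfounded: since the entries of a permutation are distinct, every one of the $n-1$ consecutive positions is either an ascent or a descent, so $\mathrm{des}(\pi)=(n-1)-\mathrm{asc}(\pi)$ holds for \emph{every} $\pi\in\mathcal{S}_n$ with no ``double-counted positions'' to worry about; hence the number of class members with $k$ descents is $\binom{n-1}{n-1-k}=\binom{n-1}{k}$.
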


\begin{proof}
By the structure above, $\pi \in \mathcal{S}_n(213,312)$ has $k$ ascents if and only if $\pi_{k+1}=n$.  There are $\binom{n-1}{k}$ ways to determine the digits before $\pi_{k+1}$, which uniquely determines $\pi$.

Now, $\pi \in \mathcal{S}_n$ has $k$ descents if and only if $\pi$ has $n-k-1$ ascents.  There are $\binom{n-1}{n-k-1}$ permutations $\pi \in \mathcal{S}_n(213,312)$ with $n-k-1$ ascents, so there are $\binom{n-1}{n-k-1} = \binom{n-1}{k}$ such permutations with $k$ descents.
\end{proof}

Proposition \ref{P:213312asc} gives a new interpretation of Pascal's triangle (\seqnum{A007318}).

\begin{prop}\label{P:213312dasc}
For $n \geq 1$,
$$\mathrm{a}_{n,k}^{\mathrm{dasc}}(213,312)=\mathrm{a}_{n,k}^{\mathrm{ddes}}(213,312) = \begin{cases}
n,&k=0;\\
\binom{n-1}{k+1},& k\geq 1.
\end{cases}$$
\end{prop}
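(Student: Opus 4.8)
The plan is to run directly off the explicit structure of $\{213,312\}$-avoiding permutations recalled just before the statement: if $\pi \in \mathcal{S}_n(213,312)$ and $\pi_p = n$, then $\pi_1\cdots\pi_{p-1}$ is increasing and $\pi_{p+1}\cdots\pi_n$ is decreasing, and such a $\pi$ is completely determined by the choice of which $p-1$ values precede $n$, giving $\binom{n-1}{p-1}$ permutations for each position $p$ of the entry $n$. First I would dispatch the equality $\mathrm{a}_{n,k}^{\mathrm{dasc}}(213,312)=\mathrm{a}_{n,k}^{\mathrm{ddes}}(213,312)$ for free: the basis $\{213,312\}$ is closed under reversal, since $213^r=312$ and $312^r=213$, so the general symmetry $\mathrm{a}_{n,k}^{\mathrm{dasc}}(\rho_1,\rho_2)=\mathrm{a}_{n,k}^{\mathrm{ddes}}(\rho_1^r,\rho_2^r)$ recorded in the introduction yields the equality immediately. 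It then suffices to compute the double-ascent distribution.

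Next I would read the ascent/descent pattern straight off the structure. Because $\pi_1<\cdots<\pi_{p-1}<\pi_p=n$, there is an ascent at each of the positions $1,\ldots,p-1$; because $n=\pi_p>\pi_{p+1}>\cdots>\pi_n$, there is a descent at each of the positions $p,\ldots,n-1$. Thus all ascents occupy the single contiguous block of positions $\{1,\ldots,p-1\}$, and position $p-1$ (an ascent) is immediately followed by position $p$ (a descent), so the peak at $n$ contributes no double ascent. A run of $p-1$ consecutive ascents produces exactly $\max(p-2,0)$ double ascents, whence $\mathrm{dasc}(\pi)=p-2$ when $p\geq 2$ and $\mathrm{dasc}(\pi)=0$ when $p=1$.

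Finally I would sum over $p$. For $k\geq 1$, the condition $\mathrm{dasc}(\pi)=k$ forces $p=k+2$, contributing $\binom{n-1}{k+1}$ permutations. For $k=0$, the condition $\mathrm{dasc}(\pi)=0$ holds exactly when $p\in\{1,2\}$, contributing $\binom{n-1}{0}+\binom{n-1}{1}=n$ permutations. This produces the claimed formula for $\mathrm{dasc}$, and combined with the reversal symmetry above it simultaneously gives the $\mathrm{ddes}$ distribution.

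The computation is essentially routine once the structural description is in hand; the only points that demand care are the boundary cases $p\in\{1,2\}$ in the $k=0$ count and the observation that no double ascent straddles the maximum $n$ (equivalently, that the peak at $n$ cleanly separates the single ascent run from the single descent run). I expect the mild bookkeeping in the $k=0$ case — merging the $p=1$ and $p=2$ contributions into $n$ — to be the only place where an off-by-one slip could sneak in.
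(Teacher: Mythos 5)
Your proof is correct and follows essentially the same route as the paper: both arguments read the double-ascent count off the position of $n$ in the structural decomposition (increasing block, then $n$, then decreasing block), count the $\binom{n-1}{k+1}$ choices of digits preceding $n$, and obtain the $\mathrm{ddes}$ distribution from the reversal symmetry of the basis $\{213,312\}$. Your write-up is just a more explicit version of the paper's proof, spelling out why $\mathrm{dasc}(\pi)=\max(p-2,0)$ and splitting the $k=0$ case into the $p=1$ and $p=2$ contributions, where the paper simply notes that $\pi_1$ determines $\pi$.
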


\begin{proof}
Suppose $\pi \in \mathcal{S}_n(213,312)$ has no double ascents.  Then either $\pi_1=n$ or $\pi_2=n$.  In other words, the digit $\pi_1$ determines $\pi$, and there are $n$ choices of $\pi_1$, so we have the first case.

Otherwise, if $k \geq 1$, then $\pi \in \mathcal{S}_n(213,312)$ has $k$ double ascents if and only if $\pi_{k+2}=n$.  There are $\binom{n-1}{k+1}$ ways to determine the digits before $\pi_{k+2}$, which uniquely determines $\pi$.

Since reversing $\pi$ is an involution on $\mathcal{S}_n(213,312)$ that sends double ascents to double descents and vice versa, we get the same enumeration for $\mathrm{a}_{n,k}^{\mathrm{ddes}}(213,312)$.
\end{proof}

While the triangle in Proposition \ref{P:213312dasc} is straightforward to compute, it is new to OEIS and given in \seqnum{A299927}.

\begin{prop}\label{P:213312pk}
$$\mathrm{a}_{n,k}^{\mathrm{pk}}(213,312) = \begin{cases}
2,&k=0;\\
2^{n-1}-2,&k=1;\\
0,&\text{otherwise}.\end{cases}$$
\end{prop}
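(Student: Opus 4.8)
The plan is to exploit the explicit structure of $\{213,312\}$-avoiding permutations already recorded in the paragraph preceding the proposition. Writing $\pi=\alpha\, n\,\beta$ where $n=\pi_i$, $\alpha=\pi_1\cdots\pi_{i-1}$ is increasing, and $\beta=\pi_{i+1}\cdots\pi_n$ is decreasing, I would first observe that $\pi$ is \emph{unimodal}: it strictly increases up to the entry $n$ and strictly decreases thereafter. The key consequence is that the only local maximum of $\pi$ is the entry $n$ itself. Indeed, no peak can occur strictly inside $\alpha$ (increasing) or strictly inside $\beta$ (decreasing), and the last entry of $\alpha$ and first entry of $\beta$ are both dominated by the adjacent $n$, so they cannot serve as the middle of a peak either. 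Hence $\mathrm{pk}(\pi)\le 1$ for every $\pi\in\mathcal{S}_n(213,312)$, which already rules out all $k\ge 2$ and explains the ``$0$, otherwise'' case.

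Next I would pin down exactly when the single available peak is realized. A peak has $n$ as its middle entry, so it exists if and only if $n$ has at least one neighbor on each side, i.e.\ precisely when $2\le i\le n-1$; if instead $i=1$ or $i=n$ then $\pi$ is one of the two monotone permutations $J_n$ or $I_n$ and has no peak at all. This gives the dichotomy $\mathrm{pk}(\pi)=0$ exactly for the two permutations $I_n$ and $J_n$, and $\mathrm{pk}(\pi)=1$ for every other $\pi$. The enumeration is then immediate: the $k=0$ case contributes the two permutations $I_n$ and $J_n$, and the $k=1$ case contributes everything else, namely
$$\mathrm{a}_{n,1}^{\mathrm{pk}}(213,312)=\sum_{i=2}^{n-1}\binom{n-1}{i-1}=2^{n-1}-\binom{n-1}{0}-\binom{n-1}{n-1}=2^{n-1}-2,$$
using the count $\binom{n-1}{i-1}$ of such permutations with $n$ in position $i$ and the fact that the total is $2^{n-1}$.

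There is no real obstacle here; the entire argument rests on the single structural observation that these permutations are unimodal with unique maximum $n$, so the peak statistic degenerates almost completely. The only point requiring a word of care is the small-$n$ boundary: the formula presupposes $n\ge 2$ (for $n=1$ the empty-sided maximum gives one permutation with no peak, and $2^{n-1}-2$ would be negative), and one should verify the base case $n=2$, where $I_2=12$ and $J_2=21$ are the only permutations and both have no peak, matching ``$2$'' for $k=0$ and ``$2^{1}-2=0$'' for $k=1$.
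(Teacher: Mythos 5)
Your proof is correct and follows essentially the same route as the paper: both rely on the unimodal structure $\pi=\alpha\,n\,\beta$ (with $\alpha$ increasing, $\beta$ decreasing) to conclude that any peak must have $n$ as its middle entry, so $\mathrm{pk}(\pi)\le 1$, with $I_n$ and $J_n$ the only peakless permutations and the remaining $2^{n-1}-2$ having exactly one peak. Your additional binomial-sum verification and the small-$n$ boundary check are fine but not a different argument.
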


\begin{proof}
Consider $\pi \in \mathcal{S}_n(213,312)$.  By the structure described above, $\pi$ has at most one peak, and if there is a peak, it must use $n$ as its middle digit.  There are two ways to not have a peak; namely, the increasing permutation where $\pi_n=n$ and the decreasing permutation where $\pi_1=n$.  All other $2^{n-1}-2$ permutation in $\mathcal{S}_n(213,312)$ have one peak.
\end{proof}

\begin{prop}\label{P:213312vl}
$$\mathrm{a}_{n,k}^{\mathrm{vl}}(213,312)\begin{cases}
2^{n-1},&k=0;\\
0,&\text{otherwise}.
\end{cases}$$
\end{prop}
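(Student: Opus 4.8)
The plan is to leverage the unimodal structure of $\{213,312\}$-avoiding permutations already established at the start of this subsection, just before Proposition~\ref{P:213312asc}. Recall that if $\pi \in \mathcal{S}_n(213,312)$ and $\pi_i = n$, then $\pi_1 \cdots \pi_{i-1}$ is increasing and $\pi_{i+1} \cdots \pi_n$ is decreasing. Since $n$ is the largest entry, this means that $\pi$ strictly ascends from position $1$ up to position $i$ and then strictly descends from position $i$ to position $n$; in other words, every such $\pi$ is \emph{unimodal}, with its unique peak at the value $n$.

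First I would recall that a valley is a position $j$ with $\pi_j > \pi_{j+1} < \pi_{j+2}$, that is, a descent immediately followed by an ascent. The key observation is that in a unimodal permutation no descent can ever be followed by an ascent. Concretely, for a consecutive pair at positions $(j,j+1)$ we have an ascent whenever $j \leq i-1$ and a descent whenever $j \geq i$; thus all ascents precede all descents, and once we enter the descending tail every subsequent comparison remains a descent. Hence the (descent, ascent) pattern required for a valley never occurs, and $\mathrm{vl}(\pi) = 0$ for every $\pi \in \mathcal{S}_n(213,312)$.

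It then remains only to tally. Since every one of the $2^{n-1}$ permutations in $\mathcal{S}_n(213,312)$ has exactly zero valleys, I would conclude that $\mathrm{a}_{n,0}^{\mathrm{vl}}(213,312) = 2^{n-1}$ while $\mathrm{a}_{n,k}^{\mathrm{vl}}(213,312) = 0$ for all $k \geq 1$, which is the claim. I do not anticipate a genuine obstacle here, as the result is really a direct consequence of unimodality; the only points that warrant a brief check are the boundary cases $i = 1$, where $\pi = J_n$ is entirely decreasing, and $i = n$, where $\pi = I_n$ is entirely increasing, both of which are visibly valley-free and are already subsumed by the argument above.
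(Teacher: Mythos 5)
Your proof is correct, but it takes a different (and slightly longer) route than the paper. You derive the result from the structural description of $\mathcal{S}_n(213,312)$: every such permutation is unimodal, and a unimodal permutation has all ascents before all descents, so the descent-then-ascent configuration defining a valley cannot occur. The paper's proof skips the structure entirely and uses the observation (made in its Section~2) that $\mathrm{vl}(\pi)$ counts exactly the consecutive occurrences of the patterns $213$ and $312$ in $\pi$: since a consecutive occurrence is in particular an occurrence, any permutation avoiding both $213$ and $312$ classically has no valleys, full stop. Your argument buys a concrete picture of why valleys are impossible and reuses machinery already set up for the other propositions in the subsection; the paper's argument buys brevity and generality, since it needs nothing about the class beyond the definition of avoidance, and the same one-line reasoning also disposes of $\mathrm{dasc}$ over $123$-avoiders and $\mathrm{ddes}$ over $321$-avoiders elsewhere in the paper. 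Both proofs correctly conclude that all $2^{n-1}$ members of the class have zero valleys.
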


\begin{proof}
A valley is either a 213 pattern or a 312 pattern.  By definition every permutation in $\mathcal{S}_n(213,312)$ has 0 valleys.
\end{proof}

\subsection{Statistics on \texorpdfstring{$\mathcal{S}_n(132,213)$}{Sn(132,213)} and \texorpdfstring{$\mathcal{S}_n(213,231)$}{Sn(213,231)}}

The pattern classes $\mathcal{S}_n(132,213)$ and $\mathcal{S}_n(213,231)$ provide the one non-trivial instance where $\mathrm{a}_{n,k}^{\mathrm{stat}}(\rho_1,\rho_2)=\mathrm{a}_{n,k}^{\mathrm{stat}}(\rho_1^{\prime},\rho_2^{\prime})$ for the statistics of this paper.

We first describe the structure of a $\{132, 213\}$-avoiding permutation.  Suppose $\pi \in \mathcal{S}_n(132,213)$.  Since $\pi$ avoids 213, all digits before $\pi_i=n$ must be in increasing order.  Since $\pi$ avoids 132, all digits before $\pi_i=n$ are larger than all digits after $n$.  These observations imply that if $\pi \in \mathcal{S}_n(132,213)$, then $\pi = I_{i_1} \ominus \cdots \ominus I_{i_m}$ for some positive integers $i_1, \dots, i_m$. In fact, there is a natural bijection $\phi_{132,213}$ between $\mathcal{S}_n(132,213)$ and binary sequences $s=s_1\cdots s_{n-1}$ of length $n-1$; namely, if $s=\phi_{132,213}(\pi)$ then $s_i=1$ when $\pi_i<\pi_{i+1}$ and $s_i=0$ when $\pi_i>\pi_{i+1}$.  This bijection implies $\left|\mathcal{S}_n(132,213)\right| = 2^{n-1}$.

Next, we describe the structure of a $\{213, 231\}$-avoiding permutation.  Suppose $\pi \in \mathcal{S}_n(213,231)$.  Then, for all $i$, either $\pi_i  =\min(\pi_i, \pi_{i+1}, \dots \pi_n)$ or $\pi_i  =\max(\pi_i, \pi_{i+1}, \dots \pi_n)$.  If not, then $\pi_i$ together with $\min(\pi_i, \pi_{i+1}, \dots \pi_n)$ and $\max(\pi_i, \pi_{i+1}, \dots \pi_n)$ form either a 213 pattern or a 231 pattern.  Since there are two choices for each digit of $\pi$ before the last digit, $\left|\mathcal{S}_n(213,231)\right| = 2^{n-1}$.  In fact, there is a natural bijection $\phi_{213,231}$ from $\mathcal{S}_n(213,231)$ to the set of binary sequences $s=s_1\cdots s_{n-1}$ of length $n-1$; namely, $s_i=0$ when $\pi_i  =\max(\pi_i, \pi_{i+1}, \dots \pi_n)$ and $s_i=1$ when $\pi_i  =\min(\pi_i, \pi_{i+1}, \dots \pi_n)$.  

Both bijections $\phi_{132,213}$ and $\phi_{213,231}$ help prove the following propositions.

\begin{prop}\label{P:132213asc}
$$\mathrm{a}_{n,k}^{\mathrm{asc}}(132,213)=\mathrm{a}_{n,k}^{\mathrm{des}}(132,213)=\mathrm{a}_{n,k}^{\mathrm{asc}}(213,231)=\mathrm{a}_{n,k}^{\mathrm{des}}(213,231)=\binom{n-1}{k}.$$
\end{prop}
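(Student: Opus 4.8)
The plan is to exploit the two explicit bijections $\phi_{132,213}$ and $\phi_{213,231}$ between each pattern class and the set of binary strings of length $n-1$, and to check that under each bijection the statistics $\mathrm{asc}$ and $\mathrm{des}$ translate into simple, transparent statistics on strings. Since all four quantities in the claim are asserted to equal $\binom{n-1}{k}$, it suffices to prove for each class separately that the number of avoiders with $k$ ascents is $\binom{n-1}{k}$, and then to handle $\mathrm{des}$ by the standard complementary count $\mathrm{des}(\pi)=n-1-\mathrm{asc}(\pi)$ together with the symmetry $\binom{n-1}{n-1-k}=\binom{n-1}{k}$, exactly as in the proof of Proposition~\ref{P:213312asc}.

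First I would treat $\mathcal{S}_n(132,213)$. The bijection $\phi_{132,213}$ was \emph{defined} so that $s_i=1$ precisely when $\pi_i<\pi_{i+1}$, i.e.\ precisely at each ascent position of $\pi$. Hence $\mathrm{asc}(\pi)$ equals the number of $1$'s in the string $s=\phi_{132,213}(\pi)$. The number of binary strings of length $n-1$ with exactly $k$ ones is $\binom{n-1}{k}$, so $\mathrm{a}_{n,k}^{\mathrm{asc}}(132,213)=\binom{n-1}{k}$. For $\mathcal{S}_n(213,231)$ the encoding is slightly less direct: here $s_i=0$ exactly when $\pi_i=\max(\pi_i,\dots,\pi_n)$ and $s_i=1$ when $\pi_i=\min(\pi_i,\dots,\pi_n)$. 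The small observation I need is that position $i$ is an ascent of $\pi$ if and only if $\pi_i$ is the minimum of its suffix: if $\pi_i=\min(\pi_i,\dots,\pi_n)$ then in particular $\pi_i<\pi_{i+1}$, while if $\pi_i=\max(\pi_i,\dots,\pi_n)$ then $\pi_i>\pi_{i+1}$. Thus $s_i=1$ again records an ascent, $\mathrm{asc}(\pi)$ is the number of $1$'s in $\phi_{213,231}(\pi)$, and the count is once more $\binom{n-1}{k}$.

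With both ascent counts established, the descent counts follow immediately: every position $1\le i\le n-1$ is either an ascent or a descent of $\pi$ (the statistics are complementary), so $\mathrm{des}(\pi)=n-1-\mathrm{asc}(\pi)$. Therefore a permutation in either class has $k$ descents exactly when it has $n-1-k$ ascents, giving $\mathrm{a}_{n,k}^{\mathrm{des}}=\mathrm{a}_{n,n-1-k}^{\mathrm{asc}}=\binom{n-1}{n-1-k}=\binom{n-1}{k}$. This closes all four equalities.

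I do not anticipate a genuine obstacle here; the work was already done in setting up the two bijections. The only point requiring care is the verification for $\mathcal{S}_n(213,231)$ that the ``min/max of the suffix'' dichotomy coincides with the ascent/descent dichotomy at each position, since that correspondence is phrased in terms of the suffix extremum rather than directly in terms of $\pi_i$ versus $\pi_{i+1}$. Once that one-line equivalence is checked, both classes reduce to counting $1$'s in a binary string and the proposition is a direct corollary of the two structural bijections.
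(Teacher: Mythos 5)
Your proposal is correct and follows essentially the same route as the paper: both use the bijections $\phi_{132,213}$ and $\phi_{213,231}$ to identify ascents with the $1$'s of the associated binary string, reducing the count to $\binom{n-1}{k}$. The only cosmetic difference is that you obtain the descent count via $\mathrm{des}(\pi)=n-1-\mathrm{asc}(\pi)$ and binomial symmetry, whereas the paper counts the $0$'s of the string directly; these are the same observation.
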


\begin{proof}
By construction $\pi \in \mathcal{S}_n(132,213)$ has an ascent at $i$ if and only if $s=\phi_{132,213}(\pi)$ has $s_i=1$.  Therefore, $\mathrm{a}_{n,k}^{\mathrm{asc}}(132,213)$ is the number of binary sequences of length $n-1$ with exactly $k$ 1s, which is given by $\binom{n-1}{k}$.  Also, $\mathrm{a}_{n,k}^{\mathrm{des}}(132,213)$ is the number of binary sequences of length $n-1$ with exactly $k$ 0s, which is given by $\binom{n-1}{k}$.

Similarly, $\pi \in \mathcal{S}_n(213, 231)$ has an ascent at $i$ if and only if $s=\phi_{213,231}(\pi)$ has $s_i=1$ and $\pi$ has a descent at $i$ if and only if $s=\phi_{213,231}(\pi)$ has $s_i=0$, so the same enumerations follow. 
\end{proof}

Proposition \ref{P:132213asc} gives a new interpretation of Pascal's triangle (\seqnum{A007318}).

\begin{prop} \label{P:132213dasc}
$$\mathrm{a}_{n,k}^{\mathrm{dasc}}(132,213)=\mathrm{a}_{n,k}^{\mathrm{ddes}}(132,213)=\mathrm{a}_{n,k}^{\mathrm{dasc}}(213,231)=\mathrm{a}_{n,k}^{\mathrm{ddes}}(213,231)$$
and

$$\sum_{n \geq 0} \sum_{k \geq 0} \mathrm{a}_{n,k}^{\mathrm{ddes}}(132,213)q^kz^n=\dfrac{1-qz}{1-z-z^2-qz+qz^2}.$$
\end{prop}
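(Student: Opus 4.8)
The plan is to exploit the two binary-sequence bijections $\phi_{132,213}$ and $\phi_{213,231}$ already constructed in the excerpt. Both map $\mathcal{S}_n$ of the relevant class bijectively onto binary words $s=s_1\cdots s_{n-1}$ of length $n-1$, with $s_i=1$ encoding an ascent and $s_i=0$ encoding a descent. The first half of the Proposition — the fourfold equality of the distributions — should follow immediately from the fact that both bijections translate the statistic into the same combinatorial quantity on binary words. So I would begin by identifying, for a $\{132,213\}$-avoiding $\pi$, exactly which feature of the word $s=\phi_{132,213}(\pi)$ corresponds to a double descent.

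First I would translate $\mathrm{ddes}$ into a pattern on binary words. A double descent at position $i$ means $\pi_i>\pi_{i+1}>\pi_{i+2}$, i.e.\ descents at both $i$ and $i+1$, which under $\phi_{132,213}$ means $s_i=s_{i+1}=0$. Thus $\mathrm{ddes}(\pi)$ equals the number of indices $i$ with $s_i=s_{i+1}=0$, that is, the number of occurrences of the factor $00$ in the length-$(n-1)$ word $s$ (counting overlaps, so that a run of $00$'s contributes appropriately). The same argument with $\phi_{213,231}$ gives the identical count, and the $\mathrm{dasc}$ versions follow by swapping $0\leftrightarrow1$, which is a symmetry of binary words; this establishes all four equalities at once. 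The problem therefore reduces to computing the bivariate generating function $\sum_{n}\sum_{k} b_{n-1,k}\,q^k z^n$, where $b_{m,k}$ is the number of binary words of length $m$ with exactly $k$ occurrences of the factor $00$.

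Next I would set up a transfer-matrix or direct word-decomposition to enumerate binary words by occurrences of $00$. The cleanest approach is to build words one letter at a time tracking whether the previous letter was $0$ or $1$: let the generating function for words ending in $1$ (or empty) and words ending in $0$ be tracked separately, with each appended $0$ after a $0$ contributing a factor of $q$. This yields a small linear system whose solution gives $\sum_{m\geq 0}\sum_{k\geq 0} b_{m,k}q^k x^m = \frac{1}{\,1 - 2x + x^2(1-q)\,}$ or a close variant; after the index shift $z^n = z\cdot z^{m}$ with $m=n-1$ and the bookkeeping for the $n=0$ and $n=1$ base cases, this should match the claimed form
$$\frac{1-qz}{1-z-z^2-qz+qz^2}.$$
I would verify the match by clearing denominators and checking the first few coefficients against a direct count of small permutations (e.g.\ $n=2,3,4$).

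The main obstacle I anticipate is the boundary bookkeeping rather than any deep difficulty: getting the factor-counting recurrence exactly right at the ends of the word (a factor $00$ requires two consecutive $0$'s, so single-letter and empty words must be handled as base cases) and correctly performing the $m\mapsto n-1$ reindexing so that the $\frac{1-qz}{\cdots}$ numerator emerges rather than a pure $\frac{1}{\cdots}$. In particular the numerator $1-qz$ is the telltale sign that the standard geometric-series form has been corrected for the initial state, so I would pay close attention to whether the empty and length-one permutations are being counted with the right powers of $z$ and $q^0$. Once the transfer system is solved and reindexed, matching the stated rational function is a routine algebraic verification.
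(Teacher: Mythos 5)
Your proposal is correct, and its first half coincides with the paper's own argument: the paper likewise uses $\phi_{132,213}$ and $\phi_{213,231}$ to identify double ascents with $11$ factors and double descents with $00$ factors of the associated binary word, which gives all four equalities at once (the $0\leftrightarrow 1$ complementation you invoke is the implicit symmetry there). Where you genuinely differ is the generating-function computation. The paper decomposes a binary word by its initial run of zeros---a run of exactly $i\geq 1$ zeros contributes $i-1$ occurrences of $00$ and must be followed by a $1$---obtaining the recurrence
$$a(n-1,k)=a(n-2,k)+\sum_{i=1}^{k+1}a(n-2-i,\,k-(i-1)),$$
and then asserts, without displaying the algebra, that this recurrence implies the stated rational function. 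Your transfer-matrix scheme instead tracks the last letter, giving the system $W_1=x(1+W_0+W_1)$ and $W_0=x(1+W_1)+qxW_0$ for words ending in $1$ and in $0$, whose solution is
$$1+W_0+W_1=\frac{1+x-qx}{1-x-x^2-qx+qx^2},$$
after which the shift to permutation length and the empty permutation give $1+z\cdot\frac{1+z-qz}{1-z-z^2-qz+qz^2}=\frac{1-qz}{1-z-z^2-qz+qz^2}$, as claimed. This route is arguably cleaner: solving a $2\times 2$ linear system is more transparent than converting a two-index recurrence with a variable-length sum into a closed form, which the paper leaves to the reader. One caveat: the closed form you guessed in passing, $\frac{1}{1-2x+(1-q)x^2}$, is \emph{not} the solution of your own system---it already disagrees at the coefficient of $x^3$ (giving $4+4q$ instead of the correct $5+2q+q^2$)---and the discrepancy is precisely the numerator correction $1+x-qx$ coming from the boundary effects you anticipated. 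Since you flagged that form as tentative and planned to verify by clearing denominators and checking small cases, the procedure you describe would still land on the correct answer.
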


\begin{proof}
By construction $\pi \in \mathcal{S}_n(132,213)$ has a double ascent at $i$ if and only if $s=\phi_{132,213}(\pi)$ has $s_i=s_{i+1}=1$ and $\pi$ has a double descent at $i$ if and only if $s=\phi_{132,213}(\pi)$ has $s_i=s_{i+1}=0$.  Similarly, $\pi \in \mathcal{S}_n(213,231)$ has a double ascent at $i$ if and $s=\phi_{213,231}(\pi)$ has $s_i=s_{i+1}=1$ and a double descent at $i$ if and only if $s=\phi_{213,231}(\pi)$ has $s_i=s_{i+1}=0$.  Therefore $\mathrm{a}_{n,k}^{\mathrm{dasc}}(132,213)=\mathrm{a}_{n,k}^{\mathrm{ddes}}(132,213)=\mathrm{a}_{n,k}^{\mathrm{dasc}}(213,231)=\mathrm{a}_{n,k}^{\mathrm{ddes}}(213,231)$.

While there is not a straightforward closed formula, the number of binary strings with $k$ 00 factors can be determined recursively.

Let $a(n,k)$ be the number of strings of length $n$ with $k$ 00 factors, and then let $a_i(n,k)$ be the number of strings of length $n$ with exactly $k$ 00 factors and that begin with $i$ 0s.  By definition
$$\mathrm{a}_{n,k}^{\mathrm{ddes}}(132,213) = a(n-1,k)=\sum_{i=0}^{n-1} a_i(n-1,k).$$

First, consider the case when $i=0$.  We have $a_0(n,k)=a(n-1,k)$ since $i=0$ implies the string must start with 1.  The remaining $n-1$ digits may be any string of length $n-1$ with $k$ 00 factors.

Now, for $i \geq 1$, we have $a_i(n,k)=a(n-1-i, k-(i-1))$.  This is because the initial $i$ digits of our string are 0.  These 0s account for $i-1$ 00 factors.  The next digit is a 1.  The remaining $n-1-i$ digits may be any binary string of length $n-i$ with $k-(i-1)$ 00 factors.

Together, we have:

\begin{align*}
a(n-1,k)=\sum_{i=0}^{n-1} a_i(n-1,k) &= a(n-2,k) + \sum_{i=1}^{n-1} a(n-2-i, k-(i-1))\\
&= a(n-2,k) + \sum_{i=1}^{k+1} a(n-2-i, k-(i-1)).
\end{align*}

Equivalently:

$$\mathrm{a}_{n,k}^{\mathrm{ddes}}(132,213) = \mathrm{a}_{n-1,k}^{\mathrm{ddes}}(132,213)+\sum_{i=1}^{k+1} a(n-1-i, k-(i-1)).$$

This recurrence implies that 
$$\sum_{n \geq 0} \sum_{k \geq 0} \mathrm{a}_{n,k}^{\mathrm{ddes}}(132,213)q^kz^n=\dfrac{1-qz}{1-z-z^2-qz+qz^2}.$$
\end{proof}

The number of binary sequences with exactly $k$ 00 factors is given in OEIS entry \seqnum{A076791}, and Proposition \ref{P:132213dasc} gives a new permutation statistic interpretation of the sequence.

\begin{prop}\label{pk132213}
$$\mathrm{a}_{n,k}^{\mathrm{pk}}(132,213)=\mathrm{a}_{n,k}^{\mathrm{vl}}(132,213)=\mathrm{a}_{n,k}^{\mathrm{pk}}(213,231)=\mathrm{a}_{n,k}^{\mathrm{vl}}(213,231)=\binom{n}{2k+1}.$$
\end{prop}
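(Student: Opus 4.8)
The plan is to push everything through the two bijections $\phi_{132,213}$ and $\phi_{213,231}$ to binary sequences of length $n-1$, reducing all four quantities to a single counting problem about factors in binary strings. In both pattern classes the bijection was set up so that $\pi$ has an ascent at position $i$ exactly when $s_i=1$ and a descent exactly when $s_i=0$ (for $\phi_{213,231}$ this is because $\pi_i=\min(\pi_i,\dots,\pi_n)$ forces $\pi_i<\pi_{i+1}$ and $\pi_i=\max(\pi_i,\dots,\pi_n)$ forces $\pi_i>\pi_{i+1}$, as already recorded in the proof of Proposition~\ref{P:132213asc}). Consequently a peak of $\pi$, being an ascent immediately followed by a descent, corresponds to a factor $10$ in $s$, while a valley corresponds to a factor $01$. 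Since both $\phi_{132,213}$ and $\phi_{213,231}$ are bijections onto the full set of binary strings of length $n-1$, all four statistics reduce to the same enumeration problem.

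Next I would dispose of the peak/valley symmetry: reversing a binary string $s_1\cdots s_{n-1}\mapsto s_{n-1}\cdots s_1$ is an involution on $\{0,1\}^{n-1}$ that turns each $10$ factor into an $01$ factor and vice versa. Hence the number of length-$(n-1)$ strings with $k$ factors $10$ equals the number with $k$ factors $01$, which collapses $\mathrm{pk}$ and $\mathrm{vl}$ to one count. It then remains only to show that the number of binary strings of length $m=n-1$ with exactly $k$ occurrences of the factor $10$ equals $\binom{n}{2k+1}$.

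For that last count I would use the run decomposition. Every such string can be written uniquely as
$$0^{a_0}\,1^{b_1}0^{a_1}\,1^{b_2}0^{a_2}\cdots 1^{b_k}0^{a_k}\,1^{b_{k+1}},$$
where $a_0,b_{k+1}\ge 0$ while $b_1,\dots,b_k\ge 1$ and $a_1,\dots,a_k\ge 1$; here each block $1^{b_i}0^{a_i}$ (for $1\le i\le k$) contributes exactly one $10$ factor, the leading $0^{a_0}$ and its junction into $1^{b_1}$ contribute none, and the trailing $1^{b_{k+1}}$ contributes none, so the total is exactly $k$. Writing $b_i=b_i'+1$ and $a_i=a_i'+1$ for $1\le i\le k$ with $b_i',a_i'\ge 0$, the length constraint becomes
$$a_0+b_{k+1}+\sum_{i=1}^{k}(a_i'+b_i')=m-2k,$$
a stars-and-bars equation in $2k+2$ nonnegative unknowns, whose number of solutions is $\binom{(m-2k)+(2k+2)-1}{(2k+2)-1}=\binom{m+1}{2k+1}=\binom{n}{2k+1}$.

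The step I expect to require the most care is the run decomposition: one must get the inequalities on the block sizes exactly right, forcing only the 0-blocks that follow a 1-block (namely $a_1,\dots,a_k$) to be nonempty so that the $10$-count is pinned at $k$, while allowing $a_0$ and $b_{k+1}$ to vanish, since an off-by-one in which blocks may be empty changes the resulting binomial coefficient. As a sanity check I would confirm the formula against small cases (for instance $n=3$ gives $\binom{3}{1}=3$ strings with no $10$ and $\binom{3}{3}=1$ with one $10$); if a generating-function cross-check is preferred, one verifies that the transfer-matrix generating function equals $1/(1-2z+(1-q)z^2)$, whose $z^m q^k$ coefficient is indeed $\binom{m+1}{2k+1}$.
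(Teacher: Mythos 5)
Your proposal is correct, and its first half is necessarily the same as the paper's: both use $\phi_{132,213}$ and $\phi_{213,231}$ to identify peaks with $10$ factors and valleys with $01$ factors in binary strings of length $n-1$, and both dispose of the peak/valley symmetry by (what amounts to) string reversal. Where you genuinely diverge is in the final count of length-$m$ binary strings with exactly $k$ occurrences of $10$. The paper counts \emph{switches} (positions $i$ with $s_i \neq s_{i+1}$): it splits into cases according to whether $s_1=1$ or $s_1=0$, observes that a string with $k$ factors $10$ has $2k-1$, $2k$, or $2k+1$ switches depending on the case, and assembles $\binom{n}{2k}+\binom{n}{2k+1}=\binom{n+1}{2k+1}$ via Pascal's identity, after first checking boundary values such as $a(n,0)=n+1$ and $a(2k,k)=1$. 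You instead write each string uniquely as $0^{a_0}\,1^{b_1}0^{a_1}\cdots 1^{b_k}0^{a_k}\,1^{b_{k+1}}$ with the interior blocks nonempty, and solve the resulting length equation by stars and bars, getting $\binom{m+1}{2k+1}$ in a single uniform computation. The two arguments are close cousins (switch positions are exactly the junctions between your runs), but yours avoids the case analysis on the first letter, the Pascal-identity bookkeeping, and the separate boundary discussion, at the cost of having to be careful---as you note---about exactly which blocks may be empty; your inequalities $a_0, b_{k+1}\ge 0$ and $a_i, b_i \ge 1$ for $1\le i\le k$ are the right ones, and your small-case and generating-function checks confirm the count. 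Either argument completes the proof of Proposition~\ref{pk132213}.
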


\begin{proof}
By construction $\pi \in \mathcal{S}_n(132,213)$ has a peak at $i$ if and only if $s=\phi_{132,213}(\pi)$ has $s_i=1$ and $s_{i+1}=0$ and $\pi$ has a valley at $i$ if and only if $s=\phi_{132,213}(\pi)$ has $s_i=0$ and $s_{i+1}=1$.  By symmetry, $\mathrm{a}_{n,k}^{\mathrm{pk}}(132,213)=\mathrm{a}_{n,k}^{\mathrm{vl}}(132,213)$. Similarly, $\pi \in \mathcal{S}_n(213,231)$ has a peak at $i$ if and only if $s=\phi_{213,231}(\pi)$ has $s_i=1$ and $s_{i+1}=0$ and a valley at $i$ if and only if $s=\phi_{213,231}(\pi)$ has $s_i=0$ and $s_{i+1}=1$.  Therefore, $\mathrm{a}_{n,k}^{\mathrm{pk}}(132,213)=\mathrm{a}_{n,k}^{\mathrm{vl}}(132,213)=\mathrm{a}_{n,k}^{\mathrm{pk}}(213,231)=\mathrm{a}_{n,k}^{\mathrm{vl}}(213,231)$.

Let $a(n,k)$ denote the number of binary sequences of length $n$ with $k$ 10 factors.  We wish to determine $a(n-1,k)$.  

Clearly $a(n,0)=n+1$, since a binary sequence with no 10 factors consists of $i$ 0s followed by $n-i$ 1s, and there are $n+1$ choices for the value of $i$.  On the other hand, a sequence with $k$ 10 factors requires at least $2k$ digits, so if $n < 2k$, then $a(n,k)=0$.  Similarly, $a(2k,k) = 1$ corresponds to the 1 way to have a binary sequence of length $2k$ with $k$ 10 factors, namely $1010\cdots 10$.

Now that we have determined the boundary conditions, suppose that $0 < k < \frac{n-1}{2}$. Now suppose $s$ is a binary sequence of length $n$ with $k$ 10 factors. We call a position $s_i$ a switch if $s_i \neq s_{i+1}$.  In all, a sequence of length $n$ has $n-1$ positions where a switch could occur.

If $s$ starts with 1, the sequence switches from 1 to 0 $k$ times and from 0 to 1 either $k$ times or $k-1$ times, so there are $2k$ or $2k-1$ switches.  In the first case, there are $\binom{n-1}{2k}$ ways to choose the locations of the switches and in the second case there are $\binom{n-1}{2k-1}$ ways to choose the locations of the switches for a total of $\binom{n-1}{2k}+\binom{n-1}{2k-1}=\binom{n}{2k}$ binary sequences of length $n$ with $k$ 10 factors that begin in 1.

If $s$ starts with 0, the sequence switches from 1 to 0 $k$ times and from 0 to 1 either $k$ times or $k+1$ times, so there are $2k$ or $2k+1$ switches.  In the first case, there are $\binom{n-1}{2k}$ ways to choose the locations of the switches and in the second case there are $\binom{n-1}{2k+1}$ ways to choose the locations of the switches for a total of $\binom{n-1}{2k}+\binom{n-1}{2k+1}=\binom{n}{2k+1}$ binary sequences of length $n$ with $k$ 10 factors that begin in 0.

Combining these two cases, we have that $a(n,k)=\binom{n}{2k}+\binom{n}{2k+1} = \binom{n+1}{2k+1}$.  Therefore, 
$$\mathrm{a}_{n,k}^{\mathrm{pk}}(132,213)=a(n-1,k)=\binom{n}{2k+1}.$$
\end{proof}

Proposition \ref{pk132213} gives a new interpretation of OEIS sequence \seqnum{A034867}.

\subsection{Statistics on \texorpdfstring{$\mathcal{S}_n(123,132)$}{Sn(123,132)}}

We first describe the structure of a $\{123,132\}$-avoiding permutation.  For $\pi \in \mathcal{S}_n(123,132)$, either $\pi_{n-1}=1$ or $\pi_n=1$; otherwise, $1$, $\pi_{n-1}$ and $\pi_n$ would form a forbidden pattern.  There is a natural bijection $\phi_{123,132}$ between $\mathcal{S}_n(123,132)$ and binary sequences of length $n-1$ that is described recursively as follows:  $\phi_{123,132}(1)=\epsilon$, the empty string.  Then, for $\pi \in \mathcal{S}_n(123,132)$, 
$$\phi_{123,132}(\pi)=\begin{cases}
\phi_{123,132}(\mathrm{red}(\pi_1\cdots \pi_{n-2}\pi_n))0,&\pi_{n-1}=1;\\
\phi_{123,132}(\mathrm{red}(\pi_1\cdots \pi_{n-1}))1,&\pi_n=1.
\end{cases}$$
For example, $\phi_{123,132}(653241) = 11001$.  We can also read a binary string $s$ of length $n-1$ from left to right to construct the corresponding permutation $\phi^{-1}_{123,132}(s)$.  Namely, begin with $\pi^{(1)}=n$.  Then for $1 \leq i \leq n-1$, if $s_i=0$, then $\pi^{(i+1)}=\pi^{(i)}_1\cdots \pi^{(i)}_{i-1}(n-i)\pi^{(i)}_{i}$, and if $s_i=1$, then $\pi^{(i+1)}=\pi^{(i)}(n-1)$.  $\pi =\phi^{-1}_{123,132}(s) = \pi^{(n)}$. 
Because of the bijection $\phi_{123,132}$ with binary strings, we have $\left|\mathcal{S}_n(123,132)\right| = 2^{n-1}$.  We use this bijection to prove the following propositions.

\begin{prop}\label{asc123132}
$$\mathrm{a}_{n,k}^{\mathrm{asc}}(123,132)=\binom{n}{2k}.$$
\end{prop}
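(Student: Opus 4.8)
The plan is to transport the ascent statistic through the bijection $\phi_{123,132}$ to a statistic on binary strings of length $n-1$, and then count those strings. Concretely, I claim that for $\pi \in \mathcal{S}_n(123,132)$ with $s = \phi_{123,132}(\pi)$, the number $\mathrm{asc}(\pi)$ equals the number of maximal blocks of consecutive $0$s in $s$. Granting this, I would finish by counting binary strings of length $n-1$ with exactly $k$ such blocks.

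To prove the claim I would induct on $n$ using the recursive definition of $\phi_{123,132}$, writing $s = s' b$ where $s' = \phi_{123,132}(\sigma)$ is the string for the shorter permutation $\sigma$ and $b = s_{n-1}$ is the newly appended bit. There are two cases. If $b = 1$ then $\pi_n = 1$, so $\pi$ is obtained from $\sigma$ by appending the value $1$ at the end; this creates a descent $\pi_{n-1} > 1$ but no new ascent, so $\mathrm{asc}(\pi) = \mathrm{asc}(\sigma)$, matching the fact that appending a $1$ to $s'$ creates no new block of $0$s. If $b = 0$ then $\pi_{n-1} = 1$, so $\pi$ is obtained by inserting the value $1$ just before the last entry $\pi_n$; the two new adjacencies $\pi_{n-2} > 1$ and $1 < \pi_n$ replace the single adjacency of $\sigma$ between $\pi_{n-2}$ and $\pi_n$, so $\mathrm{asc}(\pi) = \mathrm{asc}(\sigma) + 1$ exactly when $\sigma$ ends in a descent and $\mathrm{asc}(\pi) = \mathrm{asc}(\sigma)$ otherwise. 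The remaining point is that $\sigma$ ends in a descent precisely when the last bit of $s'$ is $1$ (or $s'$ is empty): reading off the recursion, the last bit of $s'$ is $1$ iff $\sigma$ was built in the $\sigma_{n-1}=1$ case, i.e. iff $\sigma$ ends with its minimum, i.e. iff $\sigma$ ends in a descent. Thus appending $b=0$ raises the ascent count by $1$ exactly when it starts a fresh block of $0$s, which is exactly what is needed for the induction.

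For the enumeration, I would prepend a $1$ to $s$: the map $s \mapsto 1s$ is a bijection from binary strings of length $n-1$ onto binary strings of length $n$ beginning with $1$, and the number of maximal $0$-blocks of $s$ equals the number of $10$ factors of $1s$ (each $0$-block contributes exactly one $10$ factor, arising either from the prepended $1$ or from the $1$ ending the preceding block). Hence $\mathrm{a}_{n,k}^{\mathrm{asc}}(123,132)$ is the number of length-$n$ binary strings that begin with $1$ and have exactly $k$ factors $10$. This is precisely the begin-with-$1$ subcount carried out in the proof of Proposition \ref{pk132213}: counting the possible switch positions gives $\binom{n-1}{2k} + \binom{n-1}{2k-1} = \binom{n}{2k}$.

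The main obstacle is the bookkeeping in the second paragraph---verifying that the inserted minimum changes the ascent count by exactly the right amount, and correlating ``$\sigma$ ends in a descent'' with ``the last bit of $s'$ is $1$.'' The prepend-$1$ device is what cleanly handles the initial boundary bit and converts the $0$-block count into a factor count already enumerated in Proposition \ref{pk132213}.
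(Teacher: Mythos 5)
Your proof is correct and follows essentially the same route as the paper: transport $\mathrm{asc}$ through $\phi_{123,132}$ to a statistic on binary strings (your count of maximal $0$-blocks equals the paper's count of ``initial $0$ plus $10$-factors''), then enumerate by choosing switch positions. The only differences are cosmetic---you verify the correspondence by induction on the appending recursion rather than the paper's left-to-right rebuilding of the partial permutations $\pi^{(i)}$, and your prepend-a-$1$ device merges the paper's two cases ($s_1=0$ with $k-1$ $10$-factors, $s_1=1$ with $k$ $10$-factors) into a single count.
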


\begin{proof}
Suppose $\pi \in \mathcal{S}_n(123,132)$ has $k$ ascents and consider $s=\phi_{123,132}(\pi)$ and the sequence of partial permutations $\pi^{(1)}, \pi^{(2)}, \dots, \pi^{(n)}$ where $\pi^{(n)}=\pi$.  By construction, $\mathrm{asc}(\pi^{(i+1)}) = \mathrm{asc}(\pi^{(i)})$ or $\mathrm{asc}(\pi^{(i+1)}) = \mathrm{asc}(\pi^{(i)})+1$ for all $i$, so we seek to characterize factors in $s$ that introduce a new ascent in $\pi^{(i+1)}$ compared to $\pi^{(i)}$. 

By construction, $\mathrm{asc}(\pi^{(1)})=0$ and $\pi^{(2)}$ has an ascent if and only if $s_1=0$.  For $i \geq 3$, $\mathrm{asc}(\pi^{(i)}) = \mathrm{asc}(\pi^{(i-1)})+1$ if and only if $s_{i-2}=1$ and $s_{i-1}=0$.

Therefore, in order to determine $\mathrm{a}_{n,k}^{\mathrm{asc}}(123,132)$ we wish to count binary strings of length $n-1$ that either begin with 0 and have $k-1$ 10 factors or that begin with 1 and have $k$ 10 factors.  As before, we call a position $s_i$ a switch if $s_i \neq s_{i+1}$, and in all, a sequence of length $n-1$ has $n-2$ positions where a switch could occur.

In the first case, since $s_1=0$ and there are $k-1$ switches from 1 to 0, there must be either $k-1$ or $k$ switches from 0 to 1 for a total of either $2k-2$ or $2k-1$ switches.  In all there are $\binom{n-2}{2k-2}+\binom{n-2}{2k-1} = \binom{n-1}{2k-1}$ such binary strings.

In the second case, since $s_1=1$ and there are $k$ switches from 1 to 0 there must be either $k-1$ or $k$ switches from 0 to 1 for a total of $2k-1$ or $2k$ switches.  In all there are $\binom{n-2}{2k-1}+\binom{n-2}{2k} = \binom{n-1}{2k}$ such binary strings.

Combining both cases, there are $\binom{n-1}{2k-1}+\binom{n-1}{2k}=\binom{n}{2k}$ permutations of length $n$ that avoid 123 and 132 and have exactly $k$ ascents.
\end{proof}

Proposition \ref{asc123132} gives an alternate interpretation to OEIS \seqnum{A034839}.

\begin{prop} \label{des123132}
$$\mathrm{a}_{n,k}^{\mathrm{des}}(123,132)=\binom{n}{2(n-k-1)}.$$
\end{prop}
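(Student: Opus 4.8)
The plan is to reduce this immediately to the previous proposition by exploiting the complementarity of ascents and descents. For any $\pi \in \mathcal{S}_n$, each of the $n-1$ adjacent positions $i$ (with $1 \le i \le n-1$) is either an ascent or a descent, and never both. Hence $\mathrm{asc}(\pi) + \mathrm{des}(\pi) = n-1$ identically on all of $\mathcal{S}_n$, and in particular on the subclass $\mathcal{S}_n(123,132)$. This means that a permutation in $\mathcal{S}_n(123,132)$ has exactly $k$ descents if and only if it has exactly $n-1-k$ ascents.

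From this equivalence I would write $\mathrm{a}_{n,k}^{\mathrm{des}}(123,132) = \mathrm{a}_{n,\,n-1-k}^{\mathrm{asc}}(123,132)$, since the two counts enumerate the same set of permutations. Then I would invoke Proposition \ref{asc123132}, which gives $\mathrm{a}_{n,j}^{\mathrm{asc}}(123,132) = \binom{n}{2j}$, specialized at $j = n-1-k$. This yields
$$\mathrm{a}_{n,k}^{\mathrm{des}}(123,132) = \binom{n}{2(n-1-k)} = \binom{n}{2(n-k-1)},$$
which is exactly the claimed formula.

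There is essentially no obstacle here, as the result is a one-line consequence of the ascent count together with the universal identity $\mathrm{asc} + \mathrm{des} = n-1$; the only point requiring a moment of care is to confirm that this complementarity genuinely restricts to the pattern class (it does, being a pointwise identity on every permutation) so that the bijective count of ascents transfers directly to descents. An alternative, self-contained route would be to rerun the switch-counting argument of Proposition \ref{asc123132} directly on the binary string $\phi_{123,132}(\pi)$, tracking the positions that introduce descents rather than ascents in the partial permutations $\pi^{(i)}$, but this would merely reprove the complementary relation the long way, so I would favor the short reduction above.
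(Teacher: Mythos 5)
Your proposal is correct and follows exactly the same route as the paper's own proof: both use the pointwise identity $\mathrm{asc}(\pi)+\mathrm{des}(\pi)=n-1$ to convert the descent count into the ascent count of Proposition \ref{asc123132} evaluated at $n-k-1$. No gaps; the reduction is complete as stated.
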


\begin{proof}
For any permutation $\pi \in \mathcal{S}_n$, $\mathrm{asc}(\pi)+\mathrm{des}(\pi)=n-1$.  Therefore, a permutation of length $n$ with $k$ descents has $n-k-1$ ascents.  By Proposition \ref{asc123132}, $\mathrm{a}_{n,k}^{\mathrm{des}}(123,132)=\binom{n}{2(n-k-1)}$.
\end{proof}

Proposition \ref{des123132} gives an alternate interpretation to OEIS \seqnum{A109446}, which is a symmetry of OEIS \seqnum{A034839}.

\begin{prop} \label{dasc123132}
For $n \geq 3$
$$\mathrm{a}_{n,k}^{\mathrm{dasc}}(123,132)=\begin{cases}
2^{n-1},&k=0;\\
0,&\text{otherwise}.
\end{cases}$$
\end{prop}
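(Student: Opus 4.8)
The plan is to observe that a double ascent is, by definition, a consecutive occurrence of the pattern $123$: a position $i$ with $\pi_i < \pi_{i+1} < \pi_{i+2}$ is precisely a triple of consecutive entries whose reduction is $123$, as already noted in the History section. Since every $\pi \in \mathcal{S}_n(123,132)$ avoids the pattern $123$ outright, it contains no $123$ pattern whatsoever, and in particular no consecutive one. Hence every permutation in the class satisfies $\mathrm{dasc}(\pi)=0$, which immediately disposes of all the cases $k \geq 1$: no permutation in $\mathcal{S}_n(123,132)$ has a positive number of double ascents.

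For the case $k=0$, I would invoke the enumeration $\left|\mathcal{S}_n(123,132)\right| = 2^{n-1}$, which is already established earlier via the bijection $\phi_{123,132}$ between $\mathcal{S}_n(123,132)$ and binary sequences of length $n-1$. Since all $2^{n-1}$ of these permutations have $\mathrm{dasc}(\pi)=0$, we conclude $\mathrm{a}_{n,0}^{\mathrm{dasc}}(123,132)=2^{n-1}$, completing the computation of the distribution.

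There is essentially no obstacle to overcome here; the content of the proof is the single observation that avoiding $123$ is strictly stronger than forbidding double ascents, since it rules out even non-consecutive increasing triples. The hypothesis $n \geq 3$ plays only the bookkeeping role of ensuring that a double ascent (which requires three consecutive positions) can be defined at all. This is why the entry for $\mathrm{dasc}$ in the $\{123,132\}$ row of Table \ref{T:pairs2} is marked \emph{trivial}.
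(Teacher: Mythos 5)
Your proof is correct and matches the paper's own argument exactly: the paper's proof is the single observation that a consecutive $123$ pattern is a double ascent, so any $123$-avoiding permutation has none, with the $k=0$ count $2^{n-1}$ following from the previously established enumeration of $\mathcal{S}_n(123,132)$. Your write-up simply spells out these same steps in more detail.
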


\begin{proof}
Since a consecutive 123 pattern is a double ascent, any permutation that avoids 123 has 0 double ascents.
\end{proof}

\begin{prop} \label{P:123132ddes}
For $n \geq 3$,
$$\mathrm{a}_{n,k}^{\mathrm{ddes}}(123,132)=\binom{n-2}{k}+2\binom{n-3}{k}.$$
\end{prop}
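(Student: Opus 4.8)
The plan is to push the statistic through the bijection $\phi_{123,132}$ and reduce $\mathrm{ddes}$ on $\mathcal{S}_n(123,132)$ to a factor-counting statistic on binary strings of length $n-1$, then count those strings by their number of maximal runs. Since every $\pi\in\mathcal{S}_n(123,132)$ avoids $123$, it has no double ascents, so a double descent is exactly a descent immediately followed by a descent. I would therefore track the descent/ascent pattern at the right end of the partial permutation $\pi^{(i)}$ as we read $s=s_1\cdots s_{n-1}$ and insert the entry $n-i$ at step $i$, exactly as in the proof of Proposition \ref{asc123132}.

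First I would record the local rule. At step $i$ the inserted value $n-i$ is the current minimum, so appending it (the case $s_i=1$) creates a descent at the very end, while inserting it before the last entry (the case $s_i=0$) turns the final slope into a descent followed by an ascent. Encoding the ascents and descents of $\pi^{(i)}$ as a word in the letters $A,D$, the operation $s_i=1$ appends $D$ and the operation $s_i=0$ replaces the last letter by $DA$. Computing the resulting change in the number of $DD$ factors at each step, a new double descent is created precisely when $s_{i-1}=s_i$, with the single exception of an initial $00$, whose slope word is too short to produce a $DD$. This yields the key identity
$$\mathrm{ddes}(\pi) = \bigl(\text{number of }j\text{ with }s_j=s_{j+1}\bigr) - \begin{cases}1,& s_1=s_2=0;\\ 0,&\text{otherwise.}\end{cases}$$

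The main obstacle is proving this identity, because the $s_i=0$ operation inserts the new minimum \emph{before} the last entry and so shifts the tail of $\pi^{(i)}$; hence positions of $\pi$ do not correspond to indices of $s$ in a fixed way. I would resolve this by showing that the last two letters of the slope word of $\pi^{(i)}$ are governed only by $s_{i-1}$ and $s_{i-2}$: the last letter is $D$ if and only if $s_{i-1}=1$, and the second-to-last letter is $D$ if and only if $s_{i-1}=0$ or $s_{i-1}=s_{i-2}=1$. This makes the per-step change depend only on $s_{i-2},s_{i-1},s_i$ and isolates the boundary correction at the pair $(s_1,s_2)=(0,0)$, completing the identity.

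Finally I would count. A binary string of length $m=n-1$ with exactly $b$ maximal runs is determined by its leading symbol and a composition of $m$ into $b$ parts, giving $2\binom{m-1}{b-1}$ strings, of which $\binom{m-2}{b-1}$ begin with a run of at least two $0$s. Since the number of $j$ with $s_j=s_{j+1}$ equals $m-b$, splitting on whether $s$ begins with $00$ (where $\mathrm{ddes}=m-b-1$, so $b=n-k-2$) or not (where $\mathrm{ddes}=m-b$, so $b=n-k-1$) and adding the two contributions gives
$$\mathrm{a}_{n,k}^{\mathrm{ddes}}(123,132) = 2\binom{n-2}{k} + \binom{n-3}{k} - \binom{n-3}{k-1},$$
which collapses to $\binom{n-2}{k}+2\binom{n-3}{k}$ by Pascal's rule $\binom{n-2}{k}=\binom{n-3}{k}+\binom{n-3}{k-1}$; as a check, summing over $k$ gives $2^{n-2}+2\cdot 2^{n-3}=2^{n-1}=\left|\mathcal{S}_n(123,132)\right|$.
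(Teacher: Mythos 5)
Your proof is correct, and its first half coincides with the paper's: you push $\mathrm{ddes}$ through $\phi_{123,132}$ and arrive at exactly the identity the paper uses, namely that $\mathrm{ddes}(\pi)$ equals the number of adjacent equal pairs $s_j=s_{j+1}$ in $s=\phi_{123,132}(\pi)$, minus one when $s$ begins with $00$ (your slope-word argument justifying the local rule is actually more detailed than the paper's, which essentially asserts this characterization). Where you genuinely diverge is in enumerating the binary strings. The paper defines $a_{n,k}$ as the number of strings with $k$ admissible equal-adjacent-pair factors, derives the Pascal-type recursion $a_{n,k}=a_{n-1,k-1}+a_{n-1,k}$ by examining the last two letters, checks boundary cases, and then verifies the closed form by induction --- an argument that requires already knowing the answer. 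You instead count directly: a string of length $m$ with $b$ maximal runs is a leading symbol together with a composition of $m$ into $b$ parts, the statistic equals $m-b$ (with the correction in the initial-$00$ case forcing $b=n-k-2$ there and $b=n-k-1$ otherwise), and adding the two cases gives the formula after one application of Pascal's rule; your totals $\binom{n-3}{k}$ and $2\binom{n-2}{k}-\binom{n-3}{k-1}$ check out. This run/composition count is in the same spirit as the switch-counting arguments the paper uses for Propositions \ref{asc123132}, \ref{P:123132pk}, and \ref{vl123132}, so your route is arguably more uniform with the rest of that section and produces the closed form rather than merely confirming it, while the paper's recursion makes the Pascal-triangle structure of the answer transparent.
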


\begin{proof}
For $n \leq 2$, every permutation has 0 double descents, so we focus on the case where $n \geq 3$. Similarly, no permutation has more than $n-2$ double descents, so we focus on $k \leq n-2$.

Suppose $\pi \in \mathcal{S}_n(123,132)$ has $k$ ascents and consider $s=\phi_{123,132}(\pi)$ and the sequence of partial permutations $\pi^{(1)}, \pi^{(2)}, \dots, \pi^{(n)}$ where $\pi^{(n)}=\pi$.  By construction, $\mathrm{ddes}(\pi^{(i+1)}) = \mathrm{ddes}(\pi^{(i)})$ or $\mathrm{ddes}(\pi^{(i+1)}) = \mathrm{ddes}(\pi^{(i)})+1$ for all $i$, so we seek to characterize factors in $s$ that introduce a new double descent in $\pi^{(i+1)}$ compared to $\pi^{(i)}$. 

By construction, $\mathrm{ddes}(\pi^{(1)})=\mathrm{ddes}(\pi^{(2)})=0$ and $\pi^{(3)}$ has a double descent if and only if $s_1=s_2=1$.  For $i \geq 4$, $\mathrm{ddes}(\pi^{(i)}) = \mathrm{ddes}(\pi^{(i-1)})+1$ if and only if $s_{i-2}=s_{i-1}=1$ or $s_{i-2}=s_{i-1}=0$.

Therefore we wish to count the number of binary strings of length $n-1$ that begin with 00 and have $k$ additional 00 or 11 factors plus the number of binary strings of length $n-1$ that do not begin with 00 and have $k$ total 00 or 11 factors.

Now, suppose $k=0$.  By our characterization, there are exactly 3 such permutations.  They correspond to $\phi^{-1}_{123,132}(0101\cdots)$, $\phi^{-1}_{123,132}(1010\cdots)$, and $\phi^{-1}_{123,132}(00101010\cdots)$.  This matches our formula above since $\binom{n-2}{0}+2\binom{n-3}{0}=3$ for $n \geq 3$.

Notice that if $k =n-2$ there is $\binom{n-2}{n-2}+2\binom{n-3}{n-2}=1$ permutation with $n-2$ double descents, namely the strictly decreasing permutation, which corresponds to $\phi^{-1}_{123,132}(11\cdots 1)$.

Now, let $a_{n,k}$ be the number of binary strings of length $n$ with $k$ 00 or 11 factors (other than a possible initial 00).  We wish to determine $a_{n-1,k}$. Suppose $n \geq 4$ and $s =s_1\cdots s_n$ is such a string.  If $s_{n-1}=s_n$ then $s_1\cdots s_{n-1}$ is a string of length n-1 with $k-1$ 00 or 11 factors (other than a possible initial 00).  If $s_{n-1}\neq s_n$, then  $s_1\cdots s_{n-1}$ is a string of length n-1 with $k$ 00 or 11 factors (other than a possible initial 00).  This implies that $a_{n,k}=a_{n-1,k-1}+a_{n-1,k}$.

We now proceed to show that $a_{n,k}=\binom{n-1}{k}+2\binom{n-2}{k}$ by induction.  We have confirmed this formula holds when when $k=0$ and $k=n-2$.  In particular, this implies $a_{n,k}=\binom{n-1}{i}+2\binom{n-2}{i}$ for $0 \leq i \leq n-1$ for the case when $n=2$.

Now, suppose that $a_{n-1,i}=\binom{n-2}{i}+2\binom{n-3}{i}$ for $0 \leq i \leq n-2$.  We know that $a_{n,k}=a_{n-1,k-1}+a_{n-1,k}$.  Therefore:
\begin{align*}
a_{n,k}&=a_{n-1,k-1}+a_{n-1,k}\\
&=\binom{n-2}{k-1}+2\binom{n-3}{k-1}+\binom{n-2}{k}+2\binom{n-3}{k}\\
&=\left(\binom{n-2}{k-1}+\binom{n-2}{k}\right)+2\left(\binom{n-3}{k-1}+\binom{n-3}{k}\right)\\
&=\binom{n-1}{k}+2\binom{n-2}{k},
\end{align*}
which is what we wanted to show.
\end{proof}

Proposition \ref{P:123132ddes} gives a new interpretation of OEIS \seqnum{A093560}.

\begin{prop} \label{P:123132pk}
$$\mathrm{a}_{n,k}^{\mathrm{pk}}(123,132)=\binom{n}{2k+1}.$$
\end{prop}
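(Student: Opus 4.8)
The plan is to combine the binary-string bijection $\phi_{123,132}$ with the key structural fact that a $\{123,132\}$-avoiding permutation has \emph{no double ascents}. First I would observe that, because $\pi \in \mathcal{S}_n(123,132)$ avoids $123$, it contains no consecutive $123$ pattern, hence no double ascent; consequently every ascent of $\pi$ occurring at a position $i \le n-2$ is automatically followed by a descent at position $i+1$ and is therefore the left foot of a peak. The only ascent that can fail to be a peak is one at the final adjacent pair, position $n-1$. This gives the clean identity
$$\mathrm{pk}(\pi) = \mathrm{asc}(\pi) - [\pi_{n-1} < \pi_n],$$
where $[\,\cdot\,]$ is the Iverson bracket.

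Next I would translate both terms into the word $s = \phi_{123,132}(\pi)$ of length $n-1$. Reusing the analysis from the proof of Proposition \ref{asc123132}, an ascent is created exactly when $s_1 = 0$ or when a factor $s_j s_{j+1} = 10$ occurs, so $\mathrm{asc}(\pi)$ equals the number of $10$ factors in the prepended word $1s$ of length $n$. For the correction term I would read off the recursive construction of $\phi^{-1}_{123,132}$: the final value $1$ is inserted into the second-to-last slot precisely when $s_{n-1}=0$, which is exactly the condition $\pi_{n-1} < \pi_n$. Hence $[\pi_{n-1}<\pi_n] = [s_{n-1}=0]$, and, writing $u = 1s$ (a length-$n$ word beginning with $1$, with $u_n = s_{n-1}$), the peak statistic becomes
$$\mathrm{pk}(\pi) = \bigl(\#\,10\text{-factors of } u\bigr) - [u_n = 0].$$

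The bijection $s \mapsto u$ then reduces the problem to an enumeration over all words of length $n$ beginning in $1$, which I would carry out by counting switch positions exactly as in Proposition \ref{pk132213}. Splitting on the last letter: a word of length $n$ starting and ending in $1$ with $k$ factors $10$ has $2k$ switches among its $n-1$ gaps, giving $\binom{n-1}{2k}$ words, each with $\mathrm{pk}=k$; a word of length $n$ starting in $1$ and ending in $0$ with $k+1$ factors $10$ has $2k+1$ switches, giving $\binom{n-1}{2k+1}$ words, each with $\mathrm{pk}=(k+1)-1=k$. Summing the two cases yields
$$\mathrm{a}_{n,k}^{\mathrm{pk}}(123,132)=\binom{n-1}{2k}+\binom{n-1}{2k+1}=\binom{n}{2k+1}.$$

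I expect the main obstacle to be making the reduction $\mathrm{pk}=\mathrm{asc}-[s_{n-1}=0]$ fully rigorous: namely, arguing carefully that the absence of double ascents forces every non-terminal ascent to be a peak, and verifying that the terminal-ascent indicator coincides with the last bit of $s$. Once this bookkeeping is pinned down, the remaining enumeration is a routine switch count of the same flavor already used to produce $\binom{n}{2k+1}$ in Proposition \ref{pk132213}.
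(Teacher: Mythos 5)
Your proof is correct, but it takes a genuinely different route from the paper's. The paper re-runs the incremental analysis of the bijection $\phi_{123,132}$ once more for the peak statistic: tracking when a new peak is created as the partial permutations $\pi^{(i)}$ are built, it shows $\mathrm{pk}(\pi)$ equals the number of $01$ factors of $s=\phi_{123,132}(\pi)$, and then counts words of length $n-1$ with $k$ factors $01$ by a switch count split on the \emph{first} letter of $s$. You instead prove the structural identity $\mathrm{pk}(\pi)=\mathrm{asc}(\pi)-[\pi_{n-1}<\pi_n]$, which holds for any permutation with no double ascents (hence for anything avoiding $123$), and then piggyback on the ascent characterization already established in Proposition \ref{asc123132}, finishing with the same switch-counting technique but split on the \emph{last} letter of $u=1s$. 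Your endpoint identification $[\pi_{n-1}<\pi_n]=[s_{n-1}=0]$ is right, since the final step of the inverse bijection inserts the value $1$ into the penultimate slot exactly when $s_{n-1}=0$; and one can check your count agrees with the paper's in all four cases of $(s_1,s_{n-1})$, because the number of $10$ factors of $1s$ minus $[s_{n-1}=0]$ always equals the number of $01$ factors of $s$. What your route buys: it explains conceptually why the peak distribution on this class is so close to the ascent distribution (peaks are exactly the non-terminal ascents), it avoids redoing the case analysis of the recursive construction, and the lemma $\mathrm{pk}=\mathrm{asc}-[\text{final pair ascends}]$ transfers verbatim to any class whose basis contains $123$. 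What the paper's route buys: uniformity --- it is the same self-contained template used for $\mathrm{asc}$, $\mathrm{ddes}$, and $\mathrm{vl}$ on this class, with each statistic read off directly as a factor count in $s$ and no dependence on an earlier proposition.
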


\begin{proof}
Suppose $\pi \in \mathcal{S}_n(123,132)$ has $k$ ascents and consider $s=\phi_{123,132}(\pi)$ and the sequence of partial permutations $\pi^{(1)}, \pi^{(2)}, \dots, \pi^{(n)}$ where $\pi^{(n)}=\pi$.  By construction, $\mathrm{pk}(\pi^{(i+1)}) = \mathrm{pk}(\pi^{(i)})$ or $\mathrm{pk}(\pi^{(i+1)}) = \mathrm{pk}(\pi^{(i)})+1$ for all $i$, so we seek to characterize factors in $s$ that introduce a new peak in $\pi^{(i+1)}$ compared to $\pi^{(i)}$. 

By construction, $\mathrm{pk}(\pi^{(1)})=\mathrm{pk}(\pi^{(2)})=0$. Also, for $i \geq 3$, $\mathrm{pk}(\pi^{(i)}) = \mathrm{pk}(\pi^{(i-1)})+1$ if and only if $s_{i-2}=0$ and $s_{i-1}=1$.  Therefore, we wish to count the number of binary strings $s$ of length $n-1$ with exactly $k$ 01 factors.  We have two cases.

If $s$ begins with 0 then $s$ switches from 0 to 1 $k$ times and $s$ switches from 1 to 0 either $k-1$ times or $k$ times for a total of $2k-1$ or $2k$ switches.  There are $\binom{n-2}{2k-1}+\binom{n-2}{2k} = \binom{n-1}{2k}$ sequences in this case.

If $s$ begins with a 1 then $s$ switches from 0 to 1 $k$ times and $s$ switches from 1 to 0 either $k$ times or $k+1$ times for a total of $2k$ or $2k+1$ switches.  There are $\binom{n-2}{2k}+\binom{n-2}{2k+1} = \binom{n-1}{2k+1}$ sequences in this case.

Combining both cases yields a total of $\binom{n-1}{2k}+\binom{n-1}{2k+1}=\binom{n}{2k+1}$ binary sequences of length $n-1$ with $k$ 01 factors.  By bijection $\phi_{123,132}$, this implies $\mathrm{a}_{n,k}^{\mathrm{pk}}(123,132)=\binom{n}{2k+1}$.
\end{proof}

Proposition \ref{P:123132pk} gives a new interpretation of OEIS \seqnum{A034867}, which also appeared in Proposition \ref{pk132213}.

\begin{prop} \label{vl123132}
$$\mathrm{a}_{n,k}^{\mathrm{vl}}(123,132)=2\binom{n-1}{2k}.$$
\end{prop}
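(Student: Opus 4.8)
The plan is to reuse the bijection $\phi_{123,132}$ between $\mathcal{S}_n(123,132)$ and binary strings $s = s_1 \cdots s_{n-1}$, together with the enumeration of $10$ factors already carried out in the proof of Proposition \ref{pk132213}. First I would track what happens to valleys as $\pi = \phi^{-1}_{123,132}(s)$ is built up through the partial permutations $\pi^{(1)}, \dots, \pi^{(n)}$. Recall that at step $i$ we place the current minimum $n-i$ either at the very end (when $s_i = 1$) or in the second-to-last position (when $s_i = 0$). Appending a new minimum at the end never produces a valley, since the appended entry has no right neighbor and its left neighbor is larger. Inserting the new minimum $n-i$ into the second-to-last slot (for $i \geq 2$), on the other hand, places it strictly between two larger entries and therefore creates a valley there.

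The key subtlety, and the step I expect to be the main obstacle, is that such an insertion can simultaneously destroy an existing valley. When the previous step was also an insertion (that is, $s_{i-1} = 0$ with $i \geq 3$), the entry $n-i+1$ inserted at step $i-1$ was itself a valley, being the minimum sitting below both its neighbors; after the step-$i$ insertion its right neighbor becomes the even smaller entry $n-i$, so it ceases to be a valley, and the net change is $0$. Carefully bookkeeping this create/destroy balance across all steps, and handling the boundary positions $i = 1, 2$ separately, I would prove the identity
$$\mathrm{vl}\!\left(\phi^{-1}_{123,132}(s)\right) = \#\{\, i : s_i = 1,\ s_{i+1} = 0 \,\} + [\, s_1 = s_2 = 0 \,],$$
so that the number of valleys equals the number of $10$ factors in $s$, plus one extra exactly when $s$ begins with $00$.

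With this identity in hand, the rest is a counting exercise using the fact established in the proof of Proposition \ref{pk132213} that, among binary strings of a given length, the number with exactly $j$ occurrences of $10$ is $\binom{\ell}{2j}$ if the string begins with $1$ and $\binom{\ell}{2j+1}$ if it begins with $0$, where $\ell$ is the length. I would split the strings of length $n-1$ according to their first two bits: those beginning with $1$ or with $01$ have valley count equal to their number of $10$ factors, whereas those beginning with $00$ gain one extra valley. Counting the three families with valley count $k$ gives, respectively, $\binom{n-1}{2k}$ (begins $1$), $\binom{n-2}{2k}$ (begins $01$, obtained by deleting the leading $0$), and $\binom{n-2}{2k-1}$ (begins $00$, where the $10$-factor count must equal $k-1$). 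Summing and applying Pascal's rule, $\binom{n-2}{2k} + \binom{n-2}{2k-1} = \binom{n-1}{2k}$, collapses the total to $2\binom{n-1}{2k}$, as claimed. Finally I would verify the formula against $n = 3$, where $213$ and $312$ each have one valley and $231$ and $321$ have none, matching $2\binom{2}{0} = 2\binom{2}{2} = 2$.
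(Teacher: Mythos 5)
Your proposal is correct and takes essentially the same route as the paper: the same bijection $\phi_{123,132}$, incremental tracking of valleys through the partial permutations $\pi^{(1)},\dots,\pi^{(n)}$, reduction to counting $10$ factors in binary strings split according to whether $s$ begins with $1$, $01$, or $00$, and the same collapse $\binom{n-1}{2k}+\binom{n-2}{2k}+\binom{n-2}{2k-1}=2\binom{n-1}{2k}$. One point in your favor: your identity $\mathrm{vl}\bigl(\phi^{-1}_{123,132}(s)\bigr)=\#\{i : s_i=1,\ s_{i+1}=0\}+[\,s_1=s_2=0\,]$ is the correct statement, whereas the paper's intermediate characterization claims $\mathrm{vl}(\pi^{(3)})=1$ if and only if $s_1=s_2=0$, which misses the case $s_1s_2=10$ (e.g., $\phi^{-1}_{123,132}(10)=312$ has a valley); the paper's subsequent three-case count nevertheless implicitly uses your identity, so both arguments arrive at the same correct total.
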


\begin{proof}
Suppose $\pi \in \mathcal{S}_n(123,132)$ has $k$ ascents and consider $s=\phi_{123,132}(\pi)$ and the sequence of partial permutations $\pi^{(1)}, \pi^{(2)}, \dots, \pi^{(n)}$ where $\pi^{(n)}=\pi$.  By construction, $\mathrm{vl}(\pi^{(i+1)}) = \mathrm{vl}(\pi^{(i)})$ or $\mathrm{vl}(\pi^{(i+1)}) = \mathrm{vl}(\pi^{(i)})+1$ for all $i$, so we seek to characterize factors in $s$ that introduce a new valley in $\pi^{(i+1)}$ compared to $\pi^{(i)}$. 

By construction, $\mathrm{vl}(\pi^{(1)})=\mathrm{vl}(\pi^{(2)})=0$, and $\mathrm{vl}(\pi^{(3)})=1$ if and only if $s_1=0$ and $s_2=0$.  For $i \geq 4$, $\mathrm{vl}(\pi^{(i)}) = \mathrm{vl}(\pi^{(i-1)})+1$ if and only if $s_{i-2}=1$ and $s_{i-1}=0$.  Therefore, we wish to count the number of binary strings $s$ of length $n-1$ that either begin with 00 and have $k-1$ 10 factors or that don't begin with 00 and have $k$ 10 factors.  We consider three cases: $s$ begins with 1, $s$ begins with 01, and $s$ begins with 00.

If $s_1=1$, there are $k$ switches from 1 to 0 and either $k-1$ or $k$ switches from 0 to 1 for a total of $2k-1$ or $2k$ switches.  There are $\binom{n-2}{2k-1}+\binom{n-2}{2k} = \binom{n-1}{2k}$ such sequences of length $n-1$.

If $s_1=0$ and $s_2=1$ there are still $k$ switches from 1 to 0 and either $k-1$ or $k$ switches from 0 to 1 after $s_2$ for a total of $2k-1$ or $2k$ switches after $s_2$.  There are $\binom{n-3}{2k-1}+\binom{n-3}{2k} = \binom{n-2}{2k}$ such sequences of length $n-1$.

If $s_1=0$ and $s_2=0$ there are $k-1$ switches from 1 to 0 and either $k-1$ or $k$ switches from 0 to 1 for a total of $2k-2$ or $2k-1$ switches.  There are $\binom{n-3}{2k-2}+\binom{n-3}{2k-1} = \binom{n-2}{2k-1}$ such sequences of length $n-1$.

Combining these cases yields $$\binom{n-1}{2k} + \left(\binom{n-2}{2k}+\binom{n-2}{2k-1}\right) = \binom{n-1}{2k}+\binom{n-1}{2k} = 2\binom{n-1}{2k}$$ such sequences.
\end{proof}

Proposition \ref{vl123132} gives a new interpretation of OEIS \seqnum{A119462}.

\subsection{Statistics on \texorpdfstring{$\mathcal{S}_n(132,321)$}{Sn(132,321)}}

We first describe the structure of a $\{132, 321\}$-avoiding permutation.  

\begin{prop} \label{struct132321}
If $\pi \in \mathcal{S}_n(132,321)$ then $\pi = \left(I_a \ominus I_b\right)\oplus I_{n-a-b}$ for some $1 \leq a \leq n$ and $0 \leq b \leq n-1$.
\end{prop}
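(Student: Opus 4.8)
The plan is to induct on $n$ by conditioning on the position $m$ of the largest entry, writing $\pi_m = n$. The key dichotomy is whether $n$ occupies the last position ($m = n$) or not ($m < n$); the second case will be pinned down completely by a single lemma, while the first case recurses on a shorter permutation.

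First I would isolate the following claim, which is the heart of the argument: \emph{if $\pi \in \mathcal{S}_n(132,321)$ and $\pi_m = n$ with $m < n$, then $\pi = I_m \ominus I_{n-m}$.} To prove it, note that $132$-avoidance forces every entry left of $n$ to exceed every entry right of $n$: if some $\pi_p$ (with $p < m$) were smaller than some $\pi_q$ (with $q > m$), then $\pi_p\,\pi_m\,\pi_q$ would be a $132$ pattern, since $\pi_p < \pi_q < n$. Hence the $m-1$ entries before $n$ are exactly the values $\{n-m+1,\dots,n-1\}$ and the $n-m$ entries after $n$ are exactly $\{1,\dots,n-m\}$. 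Next, $321$-avoidance linearizes each side: any inversion $\pi_p > \pi_{p'}$ with $p < p' < m$, together with any entry $\pi_q$ ($q > m$, which exists because $m < n$), gives the $321$ pattern $\pi_p\,\pi_{p'}\,\pi_q$, so the prefix must be increasing; and any inversion $\pi_q > \pi_{q'}$ with $m < q < q'$, together with $\pi_m = n$, gives the $321$ pattern $n\,\pi_q\,\pi_{q'}$, so the suffix must be increasing. Thus $\pi$ is the top $m$ values in increasing order followed by the bottom $n-m$ values in increasing order, which is precisely $I_m \ominus I_{n-m}$.

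With the lemma in hand, the remaining case $m = n$ is routine. Here $\pi = \sigma\, n$ where $\sigma = \pi_1\cdots\pi_{n-1} \in \mathcal{S}_{n-1}(132,321)$, so by the inductive hypothesis $\sigma = (I_{a} \ominus I_{b}) \oplus I_{n-1-a-b}$; appending the new maximum $n$ at the end simply extends the final increasing block (creating one if it was empty), giving $\pi = (I_a \ominus I_b) \oplus I_{n-a-b}$ with the same $a,b$. The base case $n = 1$ is $\pi = 1 = (I_1 \ominus I_0) \oplus I_0$. (Equivalently, one can avoid explicit induction by peeling off the maximal increasing suffix consisting of the top values and applying the lemma once to what remains.) A final bookkeeping check confirms that the resulting parameters satisfy $1 \le a \le n$ and $0 \le b \le n-1$: the lemma case produces $a = m,\ b = n-m$ with $a+b = n$, and the recursive case inherits valid parameters and only grows the third block. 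I expect the lemma to be the main obstacle, and within it the subtle point is recognizing that $321$-avoidance must be invoked twice in slightly different ways — once using an internal inversion of the prefix against any later entry, and once using $n$ itself against an internal inversion of the suffix — since this is exactly the place where both forbidden patterns, not just one, are genuinely needed.
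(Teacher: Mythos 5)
Your proof is correct, and it takes a genuinely different route from the paper's. The paper inducts by stripping the \emph{last position}: it reduces $\widehat{\pi}=\mathrm{red}(\pi_1\cdots\pi_{n-1})$, invokes the inductive hypothesis to classify the prefix into three shapes ($I_{n-1}$, $I_a\ominus I_{n-1-a}$, or $(I_a\ominus I_b)\oplus I_{(n-1)-a-b}$), and then argues in each case that only one or two choices of $\pi_n$ avoid creating a forbidden pattern. You instead condition on the \emph{position of the maximum value} $n$, and your key lemma --- if $n$ is not last then $\pi=I_m\ominus I_{n-m}$ exactly --- is proven directly from the two avoidance conditions with no induction at all: $132$-avoidance forces the left-of-$n$ values to dominate the right-of-$n$ values, and $321$-avoidance (used twice, as you correctly emphasize) forces both sides to be increasing. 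Induction then only handles the trivial ``max is last'' case, and as you note even that can be removed by peeling the maximal increasing suffix of top values. Your version buys a sharper statement in the main case (when $n$ is not final, the third block is empty, so the permutation is completely rigid) and makes explicit exactly where each forbidden pattern is needed, whereas the paper's appending induction leaves its case analysis (``any other choice of $\pi_n$ produces a forbidden pattern'') asserted rather than spelled out; on the other hand, the paper's build-up-by-appending structure aligns directly with how it later counts these permutations by choosing block boundaries. Both arguments are complete and of comparable length.
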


\begin{proof} We proceed by induction on $n$; that is, assume that every member of $\mathcal{S}_{n-1}(132,321)$ is of the form $\left(I_a \ominus I_b\right)\oplus I_{(n-1)-a-b}$ and prove this is the case for members of $\mathcal{S}_{n}(132,321)$.  

For the base case, notice that $\mathcal{S}_1(132,321)=\left\{1\right\}$ and $1=I_1$, so the permutation 1 has the desired form where $a=1$ and $b=0$.

For the induction step, suppose $\pi \in \mathcal{S}_{n}(132,321)$.  This implies that \\$\widehat{\pi}=\mathrm{red}(\pi_1\cdots \pi_{n-1}) \in \mathcal{S}_{n-1}(132,321)$. By the induction hypothesis, either $\widehat{\pi}=I_{n-1}$, $\widehat{\pi}=I_a \ominus I_{n-1-a}$ or $\widehat{\pi} = \left(I_a \ominus I_b\right)\oplus I_{(n-1)-a-b}$.

If $\widehat{\pi}=I_{n-1}$, there are two choices for $\pi_n$.  Either $\pi_n=1$, which means $\pi = I_{n-1}\ominus I_1$ or $\pi_n=n$, which means $\pi=I_n$.  Any other choice of $\pi_n$ produces a 132 pattern involving $\pi_n$.

If $\widehat{\pi}=I_a \ominus I_{n-1-a}$, there are two choices for $\pi_n$.  Either $\pi_n = n-a$ which means $\pi = I_a \ominus I_{n-a}$ or $\pi_n=n$ which means $\pi=(I_a \ominus I_{n-1-a})\oplus I_1$.  Any other choice of $\pi_n$ produces a 132 pattern or a 321 pattern involving $\pi_n$.

If $\widehat{\pi} = \left(I_a \ominus I_b\right)\oplus I_{(n-1)-a-b}$, then $\pi_n=n$ which means $\left(I_a \ominus I_b\right)\oplus I_{n-a-b}$.  Any other choice for $\pi_n$ produces a 132 pattern or a 321 pattern.
\end{proof}

As a consequence of Proposition \ref{struct132321}, we have the following Corollary.
\begin{cor}
$\left|\mathcal{S}_n(132,321)\right| = \binom{n}{2}+1.$
\end{cor}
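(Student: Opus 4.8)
The plan is to count the distinct permutations produced by the structural description in Proposition \ref{struct132321}. That proposition tells us every $\pi \in \mathcal{S}_n(132,321)$ equals $(I_a \ominus I_b)\oplus I_{n-a-b}$ for some $a,b$ with $1 \le a \le n$, $0 \le b \le n-1$, and (forced by the third block $I_{n-a-b}$, which needs $n-a-b \ge 0$) $a+b \le n$. So I would regard the assignment $(a,b) \mapsto (I_a\ominus I_b)\oplus I_{n-a-b}$ as a surjection from this parameter set onto the class, and the whole task becomes determining its fibers: deciding exactly when two parameter pairs yield the same permutation.

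First I would write the permutation out in one line, namely $(b+1)(b+2)\cdots(b+a)\,1\,2\cdots b\,(a+b+1)\cdots n$. The key observation is a dichotomy in $b$. When $b=0$ the middle block $1\cdots b$ is empty and the word collapses to $I_n$ regardless of $a$; thus all $n$ pairs $(1,0),\dots,(n,0)$ represent the single permutation $I_n$. When $b \ge 1$, the word has exactly one descent, located at position $a$, and I can recover both parameters from the permutation itself: $a$ is the descent position and $b=\pi_a-a$. Hence distinct pairs with $b\ge 1$ give distinct permutations, and none of them equals $I_n$.

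With the fibers understood, the count is immediate. The pairs with $b \ge 1$ are exactly those with $a\ge 1$, $b\ge 1$, and $a+b\le n$; grouping by the sum $s=a+b$ (which ranges from $2$ to $n$, with $s-1$ admissible pairs each) gives $\sum_{s=2}^{n}(s-1)=\binom{n}{2}$ distinct permutations. Adding the single permutation $I_n$ coming from the entire $b=0$ case yields $\binom{n}{2}+1$, as claimed.

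The main subtlety, and the only place the argument could go wrong, is the non-injectivity of the parametrization handed to us by Proposition \ref{struct132321}: one must notice the $b=0$ degeneracy and then confirm there is no other collision among the $b\ge 1$ pairs, for which the recovery formula $b=\pi_a-a$ is the decisive point. One should also confirm, either directly or from the proof of the proposition, that every word of the above form genuinely avoids $132$ and $321$, so that the parameter set surjects onto the class itself rather than a proper superset; this is easy, since a permutation with a single descent cannot contain a decreasing subsequence of length three, while placing the smallest values $1\cdots b$ immediately after the descent blocks any $132$ occurrence.
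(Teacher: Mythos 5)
Your proof is correct and takes essentially the same approach as the paper: both rely on the parametrization from Proposition \ref{struct132321}, separate out the degenerate case $I_n$ (your $b=0$ collapse), and count the remaining permutations as $\binom{n}{2}$ --- the paper does this by choosing the two positions where $I_a$ and $I_b$ end, which corresponds exactly to your pairs $(a,a+b)$ with $b\ge 1$. Your write-up is somewhat more careful than the paper's, which asserts uniqueness without the explicit recovery $a=$ descent position, $b=\pi_a-a$, and leaves the surjectivity (pattern-avoidance) check implicit.
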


\begin{proof}
The permutation $I_n$ is in $\mathcal{S}_n(132,321)$ for all $n$.

Otherwise, there are $n$ positions in $\pi$.  We may choose one position to be the last digit of $I_a$ and a second position to be the last position of $I_b$.  This choice of two positions uniquely determines the permutation.  There are $\binom{n}{2}$ permutations in $\mathcal{S}_n(132,321) \setminus \left\{I_n\right\}$. 
\end{proof}

The propositions below follow from the structure given in Proposition \ref{struct132321}.

\begin{prop}\label{asc132321}
$$\mathrm{a}_{n,k}^{\mathrm{asc}}(132,321)=\begin{cases}
1,&k=n-1;\\
\binom{n}{2},&k=n-2;\\
0,&\text{otherwise}.
\end{cases}$$
and
$$\mathrm{a}_{n,k}^{\mathrm{des}}(132,321)=\begin{cases}
1,&k=0;\\
\binom{n}{2},&k=1;\\
0,&\text{otherwise}.
\end{cases}$$
\end{prop}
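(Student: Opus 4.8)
The plan is to read the ascent and descent statistics directly off the structural normal form established in Proposition \ref{struct132321}. First I would rewrite $\pi = \left(I_a \ominus I_b\right) \oplus I_{n-a-b}$ in one-line notation. Unwinding the definitions of $\oplus$ and $\ominus$, this permutation is the concatenation of three increasing runs: the values $b+1, b+2, \dots, b+a$ in positions $1$ through $a$, followed by $1, 2, \dots, b$ in positions $a+1$ through $a+b$, followed by $a+b+1, a+b+2, \dots, n$ in the remaining positions. Since each run is increasing, every descent of $\pi$ must occur at a junction between two consecutive runs.

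Next I would analyze the two junctions. The junction between the first and second runs compares $\pi_a = a+b$ with $\pi_{a+1} = 1$; when the second run is nonempty (that is, when $b \geq 1$) this is a descent because $a+b \geq 2 > 1$. The junction between the second and third runs compares $\pi_{a+b} = b$ with $\pi_{a+b+1} = a+b+1$; whenever both runs are nonempty this is an \emph{ascent}, since $b < a+b+1$. Thus the only place a descent can appear is at the first junction, and it appears precisely when $b \geq 1$.

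I would then split into two cases. If $b = 0$, the second run is empty and $\pi = I_a \oplus I_{n-a} = I_n$, the identity, which has $0$ descents and hence $n-1$ ascents. If $b \geq 1$, then $\pi$ has exactly one descent and hence, using $\mathrm{asc}(\pi) + \mathrm{des}(\pi) = n-1$, exactly $n-2$ ascents. To finish, I would count the permutations in each case. The value $b = 0$ yields only the identity, so there is exactly one permutation with $0$ descents. By the Corollary to Proposition \ref{struct132321}, $\left|\mathcal{S}_n(132,321)\right| = \binom{n}{2} + 1$, so the remaining $\binom{n}{2}$ permutations all have $b \geq 1$ and therefore exactly one descent. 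This yields the claimed descent distribution, and the ascent distribution follows from $\mathrm{asc} = n-1-\mathrm{des}$ exactly as in the proof of Proposition \ref{des123132}.

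The computation is essentially bookkeeping, so the only delicate points — and the ones I would check most carefully — are the degenerate cases: verifying that the second-to-third junction never contributes a descent, and correctly handling empty runs (when $b = 0$ the middle run vanishes and $\pi$ collapses to $I_n$, while when $n-a-b = 0$ the third run is absent but this removes no descent). Once these edge cases are confirmed, the two-case split gives the result immediately.
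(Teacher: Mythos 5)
Your proposal is correct and takes essentially the same route as the paper: read ascents and descents off the normal form $\left(I_a \ominus I_b\right)\oplus I_{n-a-b}$ from Proposition \ref{struct132321}, observe that the only possible descent is at the junction ending the first run, and count the two cases using $\left|\mathcal{S}_n(132,321)\right| = \binom{n}{2}+1$. If anything, your case split on $b=0$ versus $b\geq 1$ is slightly more careful than the paper's split on $a=n$ versus $a<n$, since it explicitly handles the degenerate representations with $a<n$, $b=0$ that also collapse to $I_n$.
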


\begin{proof}
We know $\pi = \left(I_a \ominus I_b\right)\oplus I_{n-a-b}$.  If $a=n$, $\pi$ has $n-1$ ascents and 0 descents.  Otherwise, the only descent in $\pi$ is at position $a$, so $\pi$ has $n-2$ ascents and 1 descent.
\end{proof}

\begin{prop}\label{dasc132321}
$$\mathrm{a}_{n,k}^{\mathrm{dasc}}(132,321)=\begin{cases}
1,&k=n-2;\\
n,&k=n-3;\\
\binom{n}{2}-n, &k=n-4;\\
0,&\text{otherwise}.
\end{cases}$$
\end{prop}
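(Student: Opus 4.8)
The plan is to read everything off the explicit structure supplied by Proposition \ref{struct132321}. Writing $\pi = (I_a \ominus I_b) \oplus I_{n-a-b}$ and setting $c = n-a-b$, the permutation consists of three (possibly empty) increasing runs: positions $1,\dots,a$ carry the values $b+1,\dots,a+b$, positions $a+1,\dots,a+b$ carry $1,\dots,b$, and positions $a+b+1,\dots,n$ carry $a+b+1,\dots,n$. First I would verify that the transition from the first run to the second (comparing $\pi_a=a+b$ with $\pi_{a+1}=1$) is the only descent, while every other consecutive pair — inside a run, and across the second-to-third run boundary — is an ascent. This recovers the single-descent picture of Proposition \ref{asc132321} and is the key observation: a permutation in this class is determined, up to its unique descent, by where that descent sits.

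Next I would separate the identity. If $b=0$ then $\pi = I_n$ is the one fully increasing member of the class; it is ascending at all of positions $1,\dots,n-1$ and hence has $n-2$ double ascents, contributing the lone permutation at $k=n-2$. For $b\geq 1$ the descent lies at position $a$, and since a double ascent at position $i$ (for $1\le i\le n-2$) is exactly an ascent at both $i$ and $i+1$, it is destroyed precisely when $i\in\{a-1,a\}$. Thus $\mathrm{dasc}(\pi) = (n-2) - \left|\{a-1,a\}\cap\{1,\dots,n-2\}\right|$, which depends only on $a$. I would then record the three cases: the interior descent positions $2\le a\le n-2$ kill both $i=a-1$ and $i=a$, giving $\mathrm{dasc}(\pi)=n-4$; the extreme positions $a=1$ and $a=n-1$ (the latter forcing $b=1$, $c=0$) each kill only one position, giving $\mathrm{dasc}(\pi)=n-3$.

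Finally I would count the permutations in each case by parametrizing the non-identity members by $(a,b)$ with $a\ge 1$, $b\ge 1$, $a+b\le n$, so that a fixed $a$ admits $n-a$ choices of $b$. Summing, $a=1$ contributes $n-1$ permutations and $a=n-1$ contributes $1$, for a total of $n$ at $k=n-3$, while $\sum_{a=2}^{n-2}(n-a)=\binom{n}{2}-n$ permutations land at $k=n-4$; together with the identity at $k=n-2$ this gives the three stated values, and the grand total $1+n+\left(\binom{n}{2}-n\right)=\binom{n}{2}+1$ reassuringly matches the Corollary. I expect the only real care to lie in the boundary bookkeeping of the middle step: checking that $a$ ranges over $1,\dots,n-1$, that exactly one of $a-1,a$ leaves the window $\{1,\dots,n-2\}$ at each extreme, and that $n$ is taken large enough (here $n\ge 4$) for the three cases $k=n-2,n-3,n-4$ to be genuinely distinct and nonempty.
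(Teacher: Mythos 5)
Your proof is correct and takes essentially the same approach as the paper's: both read the answer off Proposition \ref{struct132321} by casing on the position $a$ of the unique descent ($a=n$ for the identity; $a\in\{1,n-1\}$ giving $n-3$; $2\le a\le n-2$ giving $n-4$) and then counting permutations for each case. The only cosmetic differences are that you compute $\mathrm{dasc}(\pi)$ as $(n-2)$ minus the number of double-ascent positions destroyed by the descent rather than summing $(a-2)+(n-a-2)$ over the two increasing runs, and you verify $\sum_{a=2}^{n-2}(n-a)=\binom{n}{2}-n$ directly instead of subtracting the earlier counts from the total $\binom{n}{2}+1$.
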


\begin{proof}
We know $\pi = \left(I_a \ominus I_b\right)\oplus I_{n-a-b}$.

If $a=n$, $\pi$ has $n-2$ double ascents.  There is one such permutation.

If $a=n-1$ then $b=1$.  This means $\pi$ has $n-3$ double ascents in $I_a$.  There is one such permutation.

If $a=1$ then there are 0 double ascents in $a$ and there are $(n-1)-2$ double ascents in $I_b \oplus I_{n-a-b} = I_{n-1}$ for a total of $n-3$ double ascents.  There are $n-1$ such permutations since there are $n-1$ choices for the value of $b$, i.e., $1 \leq b \leq n-1$.

So far we have accounted for 1 permutation with $n-2$ double ascents and $1+(n-1)=n$ permutations with $n-3$ double ascents.

If $2 \leq a \leq n-2$, then $\pi$ has $a-2$ double ascents in $I_a$ and $(n-a)-2$ double ascents in $I_b \oplus I_{n-a-b} = I_{n-1}$ for a total of $(a-2)+(n-a-2) = n-4$ double ascents.  The remaining $\binom{n}{2} -n$ permutations fall into this category, which completes the proof.
\end{proof}

\begin{prop}\label{ddes132321}
For $n \geq 3$, 
$$\mathrm{a}_{n,k}^{\mathrm{ddes}}(132,321)=\begin{cases}
\binom{n}{2}+1,&k=0;\\
0,&\text{otherwise}.
\end{cases}$$
\end{prop}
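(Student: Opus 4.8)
The plan is to observe that a double descent is nothing more than a consecutive occurrence of the pattern $321$, and then to invoke the enumeration of the whole class. First I would recall the definition: $\pi$ has a double descent at position $i$ exactly when $\pi_i > \pi_{i+1} > \pi_{i+2}$, i.e.\ the three consecutive entries $\pi_i \pi_{i+1} \pi_{i+2}$ reduce to $321$. In particular, any permutation possessing a double descent contains $321$ as a (classical) pattern, since the three entries witnessing a double descent already form a decreasing subsequence of length three.

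Since every $\pi \in \mathcal{S}_n(132,321)$ avoids $321$, it can contain no decreasing subsequence of length three, and \emph{a fortiori} no three consecutive decreasing entries. Hence $\mathrm{ddes}(\pi) = 0$ for every $\pi$ in the class, which immediately forces $\mathrm{a}_{n,k}^{\mathrm{ddes}}(132,321) = 0$ for all $k \geq 1$.

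It then remains only to count the permutations contributing to the $k = 0$ term, but these are all of $\mathcal{S}_n(132,321)$. By the Corollary following Proposition \ref{struct132321}, $\left|\mathcal{S}_n(132,321)\right| = \binom{n}{2}+1$, giving $\mathrm{a}_{n,0}^{\mathrm{ddes}}(132,321) = \binom{n}{2}+1$ for $n \geq 3$. This is precisely why the corresponding entry of Table \ref{T:pairs2} is labeled ``trivial.''

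There is essentially no obstacle here: the only conceptual step is recognizing that the statistic being counted is a consecutive version of one of the forbidden patterns, after which the statement collapses to the already-established cardinality of the class. One could alternatively argue directly from the structural form $\pi = (I_a \ominus I_b) \oplus I_{n-a-b}$ of Proposition \ref{struct132321}, noting that such a permutation has at most two descents—located at the ends of the $I_a$ and $I_b$ blocks—and that these descents can never be adjacent because each is followed by an ascent within the next increasing block; this again yields $\mathrm{ddes}(\pi) = 0$. However, the pattern-avoidance observation is cleaner and requires no case analysis, so I would present that as the main argument and perhaps mention the structural check only as a sanity remark.
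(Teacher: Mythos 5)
Your proposal is correct and follows essentially the same route as the paper: the paper's proof is the single observation that a consecutive $321$ pattern is a double descent, so every $\pi \in \mathcal{S}_n(132,321)$ has $\mathrm{ddes}(\pi)=0$, with the $k=0$ count coming from the class cardinality $\binom{n}{2}+1$. Your explicit appeal to the Corollary of Proposition \ref{struct132321} and your optional structural sanity check are just slightly more detailed write-ups of the same idea.
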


\begin{proof}
Since a consecutive 321 pattern is a double descent, any permutation that avoids 321 has 0 double descents.
\end{proof}

\begin{prop}\label{pk132321}
$$\mathrm{a}_{n,k}^{\mathrm{pk}}(132,321)=\begin{cases}
n,&k=0;\\
\binom{n-1}{2},&k=1;\\
0,&\text{otherwise}.
\end{cases}$$
\end{prop}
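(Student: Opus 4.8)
The plan is to read the peak statistic directly off the three-block structure established in Proposition~\ref{struct132321}. Every $\pi \in \mathcal{S}_n(132,321)$ has the form $\pi = (I_a \ominus I_b) \oplus I_{n-a-b}$, so as a word $\pi$ is a concatenation of three increasing runs: a first run of length $a$ on the values $\{b+1,\dots,a+b\}$, a second run of length $b$ on $\{1,\dots,b\}$, and a third run of length $n-a-b$ on $\{a+b+1,\dots,n\}$. First I would observe that within any single run there are no descents, and that the junction between the second and third runs is an ascent, since the value jumps from $\pi_{a+b}=b$ up to $\pi_{a+b+1}=a+b+1$. Hence the only possible descent in $\pi$ is at the junction between the first and second runs, namely the drop from $\pi_a = a+b$ to $\pi_{a+1}=1$, and this descent is present precisely when $b \ge 1$.

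Since a peak is an ascent immediately followed by a descent and $\pi$ has at most one descent, $\pi$ has at most one peak; in particular $\mathrm{a}_{n,k}^{\mathrm{pk}}(132,321)=0$ for all $k \ge 2$. When a peak exists its middle entry sits at position $a$, and this happens exactly when that sole descent is preceded by an ascent, i.e. when $\pi_{a-1} < \pi_a$. Because the first run is increasing, this preceding ascent is present iff the first run has length at least two, that is $a \ge 2$. I would therefore conclude that $\pi$ has exactly one peak iff $a \ge 2$ and $b \ge 1$, and no peak otherwise.

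It then remains only to count. To sidestep the redundancy in the $(a,b)$-description (for instance every choice with $b=0$ collapses to $I_n$), I would count through the position-pair bijection underlying the corollary to Proposition~\ref{struct132321}: the non-identity elements of $\mathcal{S}_n(132,321)$ correspond bijectively to pairs $1 \le p_1 < p_2 \le n$ via $a = p_1$ and $b = p_2 - p_1$, while $I_n$ is the one leftover permutation. Under this correspondence $b \ge 1$ holds automatically, so a non-identity permutation has a peak iff $a = p_1 \ge 2$, i.e. iff both chosen positions lie in $\{2,\dots,n\}$; this yields $\binom{n-1}{2}$ permutations with $k=1$. For $k=0$ I would combine the $n-1$ non-identity permutations having $p_1=1$ with the single identity permutation to get $n$, which also matches $\left(\binom{n}{2}+1\right)-\binom{n-1}{2}=n$ from the corollary.

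The only genuine subtlety, and the step I would treat most carefully, is the bookkeeping around degenerate blocks and the redundancy of the raw $(a,b)$ parametrization, since a naive sum over all admissible $(a,b)$ would overcount $I_n$ (and risk other coincidences). Routing the enumeration through the position-pair bijection resolves this cleanly and makes the boundary cases, such as $a=1$, $b=0$, or an empty third run, transparent, so the argument reduces to the elementary count above.
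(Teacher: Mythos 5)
Your proof is correct and follows essentially the same route as the paper's: both use the block structure $(I_a \ominus I_b)\oplus I_{n-a-b}$ from Proposition~\ref{struct132321} to show there is at most one peak, characterize its existence by $a\ge 2$ (with $b\ge 1$), and then count using the $\binom{n}{2}+1$ enumeration. Your explicit handling of the redundancy in the $(a,b)$ parametrization via the position-pair bijection is a small tightening of the paper's bookkeeping, not a different argument.
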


\begin{proof}
There is at most one peak in a permutation of the form $\left(I_a \ominus I_b\right)\oplus I_{n-a-b}$.  In particular, we get a peak exactly when $2 \leq a \leq n-1$.

There are $n-1$ permutations where $a=1$, and there is 1 permutation where $a=n$, so there are a total of $n$ permutations with 0 peaks.

The remaining $\binom{n}{2}+1 - n = \binom{n-1}{2}$ permutations have one peak.
\end{proof}

\begin{prop}\label{vl132321}
$$\mathrm{a}_{n,k}^{\mathrm{vl}}(132,321)\begin{cases}
2,&k=0;\\
\binom{n}{2}-1,&k=1;\\
0,&\text{otherwise}.
\end{cases}$$
\end{prop}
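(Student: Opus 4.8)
The plan is to read everything off the structural description in Proposition \ref{struct132321}: every $\pi \in \mathcal{S}_n(132,321)$ equals $(I_a \ominus I_b)\oplus I_{n-a-b}$, which splits $\pi$ into three increasing runs — positions $1,\dots,a$ carrying the values $b+1,\dots,a+b$; positions $a+1,\dots,a+b$ carrying $1,\dots,b$; and positions $a+b+1,\dots,n$ carrying $a+b+1,\dots,n$. First I would locate the descents of $\pi$. The transition from the second run to the third is an ascent (the minimum-value run is followed by the top run), and when $b=0$ the permutation is just $I_n$ with no descent at all. Hence the only possible descent is at position $a$, occurring precisely when $b \ge 1$, where $\pi_a = a+b > 1 = \pi_{a+1}$. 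Since a valley requires a descent immediately followed by an ascent, and $\pi$ has at most one descent, $\pi$ has at most one valley; this already forces $\mathrm{a}_{n,k}^{\mathrm{vl}}(132,321)=0$ for $k \ge 2$.

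Next I would determine exactly when the lone descent at position $a$ is a valley. A valley at index $a$ demands that position $a+2$ exist and that $\pi_{a+1} < \pi_{a+2}$. Because $\pi_{a+1}=1$ is the global minimum, the inequality $\pi_{a+1} < \pi_{a+2}$ holds automatically whenever position $a+2$ is present, so the only genuine condition is $a+2 \le n$, i.e. $a \le n-2$ (together with $b \ge 1$ to guarantee the descent). Thus $\pi$ has a valley iff $b \ge 1$ and $a \le n-2$, and otherwise it has none. I would double-check the two boundary families that give no valley: $b=0$ (the permutation $I_n$) and $b \ge 1$ with $a=n-1$, which forces $b=1$ and $n-a-b=0$, i.e. the single permutation $I_{n-1}\ominus I_1 = 23\cdots n1$ whose descent sits at the very last adjacency.

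Finally I would count. Using the parametrization from the corollary following Proposition \ref{struct132321}, the permutations other than $I_n$ are in bijection with unordered pairs of positions $\{p,q\}$ with $p<q$, where $p=a$ marks the last entry of the first run and $q=a+b$ marks the last entry of the second run; this accounts for all $\binom{n}{2}$ non-identity permutations. The valley condition $b \ge 1$ is automatic for these (since $b=q-p\ge 1$), so I only need $a=p \le n-2$. The excluded pairs are those with $p \ge n-1$, and since $p<q\le n$ the only such pair is $\{n-1,n\}$; hence there are $\binom{n}{2}-1$ permutations with one valley. The remaining permutations with zero valleys are $I_n$ together with $I_{n-1}\ominus I_1$, giving the count $2$, which yields the three cases of the proposition.

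The computations are entirely routine; the one place demanding care is the boundary bookkeeping in the second step — specifically confirming that when $b=1$ the entry $\pi_{a+2}$ belongs to the top run yet the valley still forms, and separately ruling out the case $a=n-1$ where the descent occupies the final adjacency and no valley can appear. Getting these edge cases right is exactly what pins the zero-valley count to $2$ rather than $1$.
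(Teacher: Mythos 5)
Your proof is correct and takes essentially the same approach as the paper: both read off from the decomposition $\pi = \left(I_a \ominus I_b\right)\oplus I_{n-a-b}$ that the only possible valley sits at position $a$, occurring precisely when $b \geq 1$ and $a \leq n-2$, so that exactly $I_n$ and $I_{n-1}\ominus I_1$ have no valley and the remaining $\binom{n}{2}-1$ permutations have one. Your write-up just makes the edge-case bookkeeping and the pair parametrization $\{a, a+b\}$ more explicit than the paper's terse version.
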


\begin{proof}
There is at most one valley in a permutation of the form $\left(I_a \ominus I_b\right)\oplus I_{n-a-b}$.  In particular, we get a valley exactly when $2 \leq n-a \leq n-1$.  The only permutations that violate this rule are when $a=n$ and when $a=n-1$.  There is one permutation with $a=n$, i.e., $I_n$.  There is one permutation with $a=n-1$, i.e., $I_{n-1}\ominus I_1$.  All other $\binom{n}{2}-1$ permutations avoiding 132 and 321 have a valley involving the last digit in $I_a$ and the first two digits of $I_b \oplus I_{n-a-b}$.
\end{proof}

\section{Acknowledgments}
This work was partially supported by NSF grant DUE-1068346.

\end{document}